\newtheorem{theorem}{Theorem}[section]
\newtheorem{lemma}[theorem]{Lemma}
\newtheorem{corollary}[theorem]{Corollary}
\newtheorem{remark}[theorem]{\it Remark}
\numberwithin{equation}{section}
     \definecolor{pass}{rgb}{0,0,0.8}%{0.6,0,0.2}     %===provisionally ok
     \definecolor{pass1}{rgb}{0,0,0.5}
\newcommand{\pt}{\partial}
\newcommand{\R}{\mathbb{R}}
\newcommand{\EE}{{\mathcal E}}
\newcommand{\RR}{{\mathcal R}}
\newcommand{\LL}{{\mathcal L}}
\newcommand {\beq} {\begin{equation}}
\newcommand {\eeq} {\end{equation}}
\newcommand {\beqa} {\begin{eqnarray}}
\newcommand {\eeqa} {\end{eqnarray}}
\newcommand {\beqann} {\begin{eqnarray*}}
\newcommand {\eeqann} {\end{eqnarray*}}
\newcommand{\TOL}{{\it TOL}}
\journal{Applied Mathematics Letters}
\begin{document}

\begin{frontmatter}

%% Title, authors and addresses

%% use the tnoteref command within \title for footnotes;
%% use the tnotetext command for theassociated footnote;
%% use the fnref command within \author or \address for footnotes;
%% use the fntext command for theassociated footnote;
%% use the corref command within \author for corresponding author footnotes;
%% use the cortext command for theassociated footnote;
%% use the ead command for the email address,
%% and the form \ead[url] for the home page:
%% \title{Title\tnoteref{label1}}
%% \tnotetext[label1]{}
%% \author{Name\corref{cor1}\fnref{label2}}
%% \ead{email address}
%% \ead[url]{home page}
%% \fntext[label2]{}
%% \cortext[cor1]{}
%% \affiliation{organization={},
%%             addressline={},
%%             city={},
%%             postcode={},
%%             state={},
%%             country={}}
%% \fntext[label3]{}

\title{%A posteriori error estimation
Pointwise-in-time a posteriori error control\\
for time-fractional parabolic equations}

%% use optional labels to link authors explicitly to addresses:
%% \author[label1,label2]{}
%% \affiliation[label1]{organization={},
%%             addressline={},
%%             city={},
%%             postcode={},
%%             state={},
%%             country={}}
%%
%% \affiliation[label2]{organization={},
%%             addressline={},
%%             city={},
%%             postcode={},
%%             state={},
%%             country={}}

\author{Natalia Kopteva\corref{cor1}}
\cortext[cor1]{Department of Mathematics and Statistics, University of Limerick, Limerick, Ireland}
\ead{natalia.kopteva@ul.ie}
%\ead[url]{https://staff.ul.ie/natalia/ }
%\affiliation{organization={Department of Mathematics and Statistics, University of Limerick},
%Department and Organization
%            addressline={XXX},
%            city={Limerick},
%            postcode={XXX},
%            state={XXX},
%            country={Ireland}}

\begin{abstract}
For
time-fractional parabolic equations with a Caputo time derivative
of order $\alpha\in(0,1)$,
%an initial-boundary value problem with a Caputo time derivative of fractional order $\alpha\in(0,1)$,
we give pointwise-in-time a posteriori error bounds in the spatial $L_2$ and $L_\infty$ norms. Hence,
an adaptive mesh construction algorithm is applied for the L1 method, which yields optimal convergence rates  $2-\alpha$
in the presence of solution singularities.
\end{abstract}

%%Graphical abstract
%\begin{graphicalabstract}
%\includegraphics{grabs}
%\end{graphicalabstract}

%%Research highlights
%\begin{highlights}
%\item Research highlight 1
%\item Research highlight 2
%\end{highlights}

%\begin{keyword}
%% keywords here, in the form: keyword \sep keyword

%% PACS codes here, in the form: \PACS code \sep code

%% MSC codes here, in the form: \MSC code \sep code
%% or \MSC[2008] code \sep code (2000 is the default)

%\end{keyword}

\end{frontmatter}

%% \linenumbers

%% main text

%\newpage

\section{Introduction}

%The purpose of this paper is to propose/set a first step in the a posteriori error estimation for
Consider a fractional-order parabolic equation, of order $\alpha\in(0,1)$, of the form
\beq\label{problem}
%\begin{array}{l}
D_t^{\alpha}u+\LL u=f(x,t)\qquad\mbox{for}\;\;(x,t)\in\Omega\times(0,T],%\\[0.2cm]
%u(x,t)=0\quad\mbox{for}\;\;(x,t)\in\pt\Omega\times(0,T],\qquad
%u(x,0)=u_0(x)\quad\mbox{for}\;\;x\in\Omega.
%\end{array}
\eeq
%where $\alpha\in(0,1)$,
subject to the initial condition $u(\cdot,0)=u_0$ in $\Omega$, and the boundary condition $u=0$ on $\pt\Omega$ for $t>0$.
This problem is posed in a bounded Lipschitz domain  $\Omega\subset\R^d$ (where $d\in\{1,2,3\}$), and involves
a spatial linear second-order elliptic operator~$\LL$.
The Caputo fractional derivative in time, denoted here by $D_t^\alpha$, is  defined \cite{Diet10},
for $t>0$, by
\begin{equation}\label{CaputoEquiv}
D_t^{\alpha} u := J_t^{1-\alpha}(\pt_t u),\qquad
J_t^{1-\alpha} v(\cdot,t) :=  \frac1{\Gamma(1-\alpha)} \int_{0}^t(t-s)^{-\alpha}\, v(\cdot, s)\, ds,
    %\quad\text{for }\ t>0,
\end{equation}
where $\Gamma(\cdot)$ is the Gamma function, and $\pt_t$ denotes the partial derivative in~$t$.

Although there is a substantial literature on the a posteriori error estimation for classical parabolic  equations,
the pointwise-in-time a posteriori error control
  appears an open question for equations of type \eqref{problem}
(the few papers for similar problems give  error estimates %in time in weighted/
in global fractional Sobolev space norms %, which cannot detect the solution singularity at $t=0$
\cite{yangpin}).

In this paper, we shall address this question by deriving pointwise-in-time a posteriori error bounds in the
$L_2(\Omega)$ and $L_\infty(\Omega)$ norms.
Furthermore, explicit upper barriers on the residual will be obtained that guarantee
that the error remains within a prescribed tolerance and within certain desirable pointwise-in-time error profiles.
These residual barriers naturally lead to an adaptive mesh construction algorithm,
which will be applied %tested
for the popular L1 method.
%
%will be employed to adaptively construct temporal meshes for the popular L1 method.
It will be demonstrated that the
%adaptive algorithm
constructed adaptive meshes
successfully detect solution singularities
and yield
optimal convergence rates $2-\alpha$, with
the error profiles in remarkable agreement with the target.

The advantages of the proposed approach include:
 +\,no need to store past values of the sampled residual (even though the latter affects
 %influences the %evolution/
the  local increments in the
 %/growth of the
 error non-locally);
 +\,applicability to wide classes of time discretizations;
 +\,low regularity assumptions;
 %no corner compatibility conditions to be satisfied;
+\,the approach works seamlessly for arbitrarily large %terminal
times.\smallskip
%(--the error bound is fully computable, with all constants shown explicitly.)

%\newpage

\noindent{\it Notation.} We use the standard inner product $\langle\cdot,\cdot\rangle$ and the norm $\|\cdot\|$
in the space $L_2(\Omega)$, as well as the standard spaces %/norms in
$L_\infty(\Omega)$, $H^1_0(\Omega)$,
$L_{\infty}(0,t;\,L_2(\Omega))$, and $W^{1,\infty}(t',t'';\,L_2(\Omega))$.
The notation $v^+:=\max\{0,\,v\}$ is used for the positive part of a generic function $v$.%\vspace{-0.2cm}

\section{A posteriori error estimates in the $L_2(\Omega)$ norm}

Given a solution approximation $u_h$ such that $u_h=u$ for $t=0$ and on $\pt\Omega$, we shall use its %standard
residual
$R_h(\cdot,t):=(D_t^\alpha+\LL)u_h(\cdot,t)-f(\cdot,t)$ for $t>0$, as well as the operator %$(D_t^\alpha+\lambda)^{-1}$
%defined by
\beq\label{Dlammbda_inv}
(D_t^\alpha+\lambda)^{-1}v(t) :=
\int_0^t\! (t-s)^{\alpha-1}\,
E_{\alpha,\alpha}(-\lambda [t-s]^\alpha)\,v(s)\,ds
\quad\;\; \mbox{for}\;t>0.
\eeq
Here $E_{\alpha,\beta}(s)=\sum_{k=0}^\infty\{\Gamma(\alpha k+\beta)\}^{-1}s^k$
is a generalized Mittag-Leffler function.
A comparison with \eqref{CaputoEquiv} shows that  $(D_t^\alpha+0)^{-1}:=J_t^\alpha$.

\begin{remark}\label{rem_comparison} Note
\cite[Remark~7.1]{Diet10}, {\cite[(2.11)%; see alsot (2.19)
]{sakamoto}}
that
%It is clear from, a simplified version of \cite[(2.11); see alsot (2.19)]{sakamoto}
\eqref{Dlammbda_inv} gives a solution of the equation
$(D_t^\alpha+\lambda)w(t)=v(t)$ for $t>0$ subject to $w(0)=0$.
Also, $E_{\alpha,\alpha}$ in \eqref{Dlammbda_inv} is positive $\forall\,\lambda\in\R$
%\cite{miller_samko}
\cite[Lemma~3.3]{sakamoto}; hence, $v\ge 0$ implies $w\ge 0$.
\end{remark}

The main results of the paper are as follows.

\begin{theorem}\label{the_L2}
Let $\LL$ in \eqref{problem}, for some $\lambda\in\R $, satisfy $\langle \LL v,v\rangle\ge \lambda\|v\|^2$ $\forall\,v\in H_0^1(\Omega)$.
Suppose a unique solution $u$ of \eqref{problem} and its approximation $u_h$ are
in $L_{\infty}(0,t;\,L_2(\Omega)) \cap W^{1,\infty}(\epsilon,t;\,L_2(\Omega))$ for any $0<\epsilon<t\le T$,
and also in
$H^1_0(\Omega)$ for any $t>0$,
while
$u_h(\cdot,0)= u_0$ and
$R_h(\cdot,t)=(D_t^\alpha+\LL)u_h(\cdot,t)-f(\cdot,t)$.
Then %for the error of the latter one has
\beq\label{L2_error}
\|(u_h-u)(\cdot,t)\|\le (D_t^\alpha+\lambda)^{-1}\|R_h(\cdot, t)\|\qquad\mbox{for}\;\;t>0.
%\le J_t^{\alpha}\|R_h(\cdot, t)\|_{L_2(\Omega)},
\eeq
%where $R_h(\cdot,t)=(D_t^\alpha+\LL)u_h(\cdot,t)-f(\cdot,t)$.
\end{theorem}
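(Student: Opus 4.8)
The plan is to reduce the vector-valued problem to a scalar fractional differential inequality for the error norm and then invert it via Remark~\ref{rem_comparison}. Set $e := u_h - u$. Since $e$ vanishes at $t=0$ and on $\pt\Omega$, and since $(D_t^\alpha+\LL)u=f$ while $(D_t^\alpha+\LL)u_h = R_h + f$, subtracting gives, by linearity,
\beq
(D_t^\alpha+\LL)\,e(\cdot,t)=R_h(\cdot,t),\qquad e(\cdot,0)=0,
\eeq
with $e(\cdot,t)\in H_0^1(\Omega)$ for $t>0$. I would then test this identity against $e$ in $L_2(\Omega)$, obtaining $\langle D_t^\alpha e,e\rangle+\langle\LL e,e\rangle=\langle R_h,e\rangle$. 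The coercivity hypothesis bounds the second term below by $\lambda\|e\|^2$, and Cauchy--Schwarz bounds the right-hand side above by $\|R_h\|\,\|e\|$.

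The crucial ingredient is the fractional analogue of the identity $\langle\pt_t e,e\rangle=\|e\|\,\pt_t\|e\|$, namely the chain-rule (convexity) inequality
\beq
\langle D_t^\alpha e(\cdot,t),\,e(\cdot,t)\rangle \;\ge\; \|e(\cdot,t)\|\;D_t^\alpha\|e(\cdot,t)\|.
\eeq
Granting this, combining the three estimates and dividing by $\|e\|$ on the set where it is positive yields the scalar inequality
\beq
(D_t^\alpha+\lambda)\,\|e(\cdot,t)\|\;\le\;\|R_h(\cdot,t)\|,\qquad \|e(\cdot,0)\|=0,
\eeq
where I would check separately that at times with $\|e\|=0$ the left-hand side is in fact $\le 0$ (the representation of $D_t^\alpha$ against the nonnegative, here-minimal function $\|e\|$ shows $D_t^\alpha\|e\|\le 0$), so the inequality holds for all $t>0$.

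Finally I would invert this. By Remark~\ref{rem_comparison}, $(D_t^\alpha+\lambda)^{-1}$ of \eqref{Dlammbda_inv} returns the zero-initial-data solution of $(D_t^\alpha+\lambda)w=v$, and since $E_{\alpha,\alpha}$ is positive it is order-preserving: $v\ge0\Rightarrow w\ge0$, hence $a\le b$ implies $(D_t^\alpha+\lambda)^{-1}a\le(D_t^\alpha+\lambda)^{-1}b$. Applying this monotone linear operator to both sides of the scalar inequality, and using $(D_t^\alpha+\lambda)^{-1}(D_t^\alpha+\lambda)\|e\|=\|e\|$ (valid because $\|e(\cdot,0)\|=0$), gives exactly $\|e(\cdot,t)\|\le(D_t^\alpha+\lambda)^{-1}\|R_h(\cdot,t)\|$, which is \eqref{L2_error}.

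The main obstacle is the rigorous proof of the chain-rule inequality together with the division by $\|e\|$, since $t\mapsto\|e(\cdot,t)\|$ need not be differentiable where $e$ vanishes. I expect to establish the inequality cleanly by introducing the auxiliary function $g(s):=\|e(\cdot,t)\|\,\|e(\cdot,s)\|-\langle e(\cdot,s),e(\cdot,t)\rangle$, which is nonnegative by Cauchy--Schwarz and satisfies $g(t)=0$; writing out $D_t^\alpha g(t)$ and integrating by parts expresses it through the nonnegative increments $g(t)-g(s)\le 0$, giving $D_t^\alpha g(t)\le 0$, which is precisely the desired inequality. The $W^{1,\infty}(\eps,t;L_2(\Omega))$ regularity of $e$ provides the Lipschitz-in-time control needed to define the Caputo derivatives and justify the integration by parts, while the vanishing-norm set is absorbed as noted above. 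Everything else is routine, and the argument uses only coercivity of $\LL$, so it applies for any $\lambda\in\R$ and arbitrarily large $T$.
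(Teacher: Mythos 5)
Your proposal is correct and follows essentially the same route as the paper: your chain-rule inequality is exactly the paper's Lemma~\ref{lem_aux}, proved there by taking the inner product of the integrated-by-parts representation \eqref{aux_lem_int} with $v(\cdot,t)$ and applying Cauchy--Schwarz (the same computation, since the resulting integrand is precisely your auxiliary function $g(s)$), and your inversion via the monotone operator $(D_t^\alpha+\lambda)^{-1}$ is the paper's use of Remark~\ref{rem_comparison}. Your explicit treatment of the set where $\|e(\cdot,t)\|$ vanishes is a detail the paper leaves implicit, but it does not alter the argument.
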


\begin{corollary}%[residual barrier]
\label{cor1_L2}
Under the conditions of Theorem~\ref{the_L2},
if $\|R_h(\cdot,t)\|\le %\RR(t):
(D_t^\alpha+\lambda)\EE(t)$ $\forall\,t>0$ for some
barrier function $\EE(t)\ge 0$ $\forall\,t\ge0$, then $\|(u_h-u)(\cdot,t)\|\le \EE(t)$ $\forall\,t\ge0$.
\end{corollary}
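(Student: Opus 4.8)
The plan is to combine the pointwise bound \eqref{L2_error} of Theorem~\ref{the_L2} with the order-preserving (monotonicity) property of the operator $(D_t^\alpha+\lambda)^{-1}$, and then to use that this operator inverts $(D_t^\alpha+\lambda)$. Since $u_h(\cdot,0)=u_0=u(\cdot,0)$, the asserted bound holds trivially at $t=0$ (as $\EE(0)\ge0$), so I may restrict attention to $t>0$. For such $t$, Theorem~\ref{the_L2} gives $\|(u_h-u)(\cdot,t)\|\le(D_t^\alpha+\lambda)^{-1}\|R_h(\cdot,t)\|$, so it suffices to show that the right-hand side is dominated by $\EE(t)$.

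First I would record that $(D_t^\alpha+\lambda)^{-1}$ is monotone. By Remark~\ref{rem_comparison}, $E_{\alpha,\alpha}$ is positive for every real argument, so the kernel $(t-s)^{\alpha-1}E_{\alpha,\alpha}(-\lambda[t-s]^\alpha)$ in \eqref{Dlammbda_inv} is nonnegative on $0<s<t$; consequently $v_1\le v_2$ (pointwise in $s$) implies $(D_t^\alpha+\lambda)^{-1}v_1\le(D_t^\alpha+\lambda)^{-1}v_2$. Applying this to the hypothesis $\|R_h(\cdot,t)\|\le(D_t^\alpha+\lambda)\EE(t)$ yields
\[
(D_t^\alpha+\lambda)^{-1}\|R_h(\cdot,t)\|\le(D_t^\alpha+\lambda)^{-1}\bigl[(D_t^\alpha+\lambda)\EE\bigr](t).
\]

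It then remains to evaluate the right-hand side. Set $w:=(D_t^\alpha+\lambda)^{-1}\bigl[(D_t^\alpha+\lambda)\EE\bigr]$. By Remark~\ref{rem_comparison}, $w$ is precisely the solution of $(D_t^\alpha+\lambda)w=(D_t^\alpha+\lambda)\EE$ subject to $w(0)=0$. Hence $\phi:=\EE-w$ solves the homogeneous problem $(D_t^\alpha+\lambda)\phi=0$ with $\phi(0)=\EE(0)$, so $\phi(t)=\EE(0)\,E_\alpha(-\lambda t^\alpha)$. The homogeneous Caputo problem preserves the sign of its initial data (equivalently, $E_\alpha(-\lambda t^\alpha)\ge0$, the standard companion to the positivity of $E_{\alpha,\alpha}$ noted in Remark~\ref{rem_comparison}), and $\EE(0)\ge0$; therefore $\phi\ge0$, i.e.\ $w\le\EE$. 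Chaining the three inequalities gives $\|(u_h-u)(\cdot,t)\|\le\EE(t)$ for all $t>0$, which together with the $t=0$ case completes the argument. The only point requiring care—and the crux of the proof—is this last step: $(D_t^\alpha+\lambda)^{-1}$ is a right inverse of $(D_t^\alpha+\lambda)$ only modulo the condition $w(0)=0$, so the two operators cannot simply be cancelled when $\EE(0)\ne0$; everything hinges on verifying that the homogeneous correction $\EE(0)\,E_\alpha(-\lambda t^\alpha)$ carries the favorable (nonnegative) sign, while the remaining steps reduce to the kernel nonnegativity already available from Remark~\ref{rem_comparison}.
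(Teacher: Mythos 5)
Your proof is correct and takes essentially the same route as the paper: both arguments rest on the sign-preservation of $(D_t^\alpha+\lambda)^{-1}$ from Remark~\ref{rem_comparison} together with splitting off the nonnegative homogeneous component $\EE(0)\,E_{\alpha,1}(-\lambda t^\alpha)$ when $\EE(0)>0$. The only cosmetic difference is that you enter via the statement of Theorem~\ref{the_L2} (the bound \eqref{L2_error}) combined with monotonicity of the integral operator, whereas the paper applies the comparison argument directly to the differential inequality \eqref{e_eq} established inside the theorem's proof.
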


The above corollary may seem
to imply that one can get any desirable
 pointwise-in-time error profile $\EE(t)$ on demand.
The tricky part is to ensure that $(D_t^\alpha+\lambda)\EE(t)>0$ for $t>0$, which is not true for
a general positive $\EE$. %This issue is addressed by the following result.
Two possible error profiles will be described by the following result.

\begin{corollary}\label{cor1_L2_bounds}
Under the conditions of Theorem~\ref{the_L2} with $\lambda\ge0$,
for the error $e=u_h-u$
 one has
\begin{subequations}\label{barrier_bounds}
\begin{align}
%\|e(\cdot, t)\|\le \sup_{s>0}\Bigl\{(...)^{-1}\|R(\cdot,s)\|\Bigr\}
&\|e(\cdot,t)\|\le \sup_{0<s\le t}\!\left\{ \frac{ \|R_h(\cdot,s)\|}{\RR_0(s) }\right\},
&&
\|e(\cdot,t)\|\le t^{\alpha-1}\!
\sup_{0<s\le t}\!\left\{ \frac{ \|R_h(\cdot,s)\|}{\RR_1(s) }\right\},
%\sup_{0<s\le t}\Bigl\{%\RR_2(s)^{-1}
%\|R_h(\cdot,s)\|/\RR_1(s)\Bigr\},
\label{barrier_bounds_a}
\\[0.3cm]
&\RR_0(t):=\{\Gamma(1-\alpha)\}^{-1}\,t^{-\alpha}+\lambda,
&&\RR_1(t):=\{\Gamma(1-\alpha)\}^{-1}\,t^{-1}\rho(\tau/t)+\lambda\,\EE_1(t),
\end{align}
\beq
\EE_1(t):=\max\{\tau, t\}^{\alpha-1},\quad
\rho(s):=s^{-\beta}[1-((1-s)^+)^\beta]\ge s^{-\beta}\min\{\beta s,\,1\},
\quad\beta:=1-\alpha,
%\quad\tau>0,
\eeq
where $\tau>0$ is an arbitrary parameter (and $t^{\alpha-1}$ in \eqref{barrier_bounds_a} can be replaced by $\EE_1(t)$).
\end{subequations}
\end{corollary}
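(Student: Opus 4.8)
The plan is to read off both inequalities in \eqref{barrier_bounds} directly from the pointwise estimate $\|e(\cdot,t)\|\le(D_t^\alpha+\lambda)^{-1}\|R_h(\cdot,t)\|$ of Theorem~\ref{the_L2}, using that, by Remark~\ref{rem_comparison}, the kernel of $(D_t^\alpha+\lambda)^{-1}$ in \eqref{Dlammbda_inv} is nonnegative, so the operator is monotone. The crux is that $\RR_0$ and $\RR_1$ are exactly the images of the two barrier profiles $\phi_0(t):=1$ and $\phi_1(t):=\EE_1(t)=\max\{\tau,t\}^{\alpha-1}$ under the Riemann--Liouville operator $\pt_tJ_t^{1-\alpha}+\lambda$; equivalently, I aim to establish the two identities $(D_t^\alpha+\lambda)^{-1}\RR_0\equiv1$ and $(D_t^\alpha+\lambda)^{-1}\RR_1\equiv\EE_1$. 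Granting these, fix $t>0$ and set $M_i(t):=\sup_{0<s\le t}\{\|R_h(\cdot,s)\|/\RR_i(s)\}$, so that $\|R_h(\cdot,s)\|\le M_i(t)\,\RR_i(s)$ on $(0,t]$. Since the kernel in \eqref{Dlammbda_inv} at time $t$ integrates only over $s\in(0,t)$, monotonicity gives
\beq
\|e(\cdot,t)\|\le(D_t^\alpha+\lambda)^{-1}\|R_h(\cdot,t)\|\le M_i(t)\,(D_t^\alpha+\lambda)^{-1}\RR_i(t),
\eeq
which equals $M_0(t)$ for $i=0$ and $M_1(t)\,\EE_1(t)$ for $i=1$; since $\EE_1(t)\le t^{\alpha-1}$, this yields \eqref{barrier_bounds_a}, with the stated sharpening in which $t^{\alpha-1}$ is replaced by $\EE_1(t)$.

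For the two identities I would use that $(D_t^\alpha+\lambda)^{-1}$ inverts $\pt_tJ_t^{1-\alpha}+\lambda$ on functions with arbitrary (finite) initial value (writing $E_\alpha:=E_{\alpha,1}$). Indeed, from $\pt_tJ_t^{1-\alpha}\phi=D_t^\alpha\phi+\{\Gamma(1-\alpha)\}^{-1}\phi(0^+)\,t^{-\alpha}$ and the term-by-term evaluation $(D_t^\alpha+\lambda)^{-1}[\{\Gamma(1-\alpha)\}^{-1}t^{-\alpha}]=E_\alpha(-\lambda t^\alpha)$ (obtained by expanding $E_{\alpha,\alpha}$ in \eqref{Dlammbda_inv} and integrating with the Beta function), together with the Caputo inversion $(D_t^\alpha+\lambda)^{-1}(D_t^\alpha+\lambda)\phi=\phi-\phi(0^+)E_\alpha(-\lambda t^\alpha)$ (Remark~\ref{rem_comparison} gives the zero--initial--value case, and $E_\alpha(-\lambda t^\alpha)$ is the homogeneous solution of $(D_t^\alpha+\lambda)w=0$), one gets $(D_t^\alpha+\lambda)^{-1}[(\pt_tJ_t^{1-\alpha}+\lambda)\phi]=\phi$. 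Applying this with $\phi=\phi_0\equiv1$, whose Riemann--Liouville derivative is $\{\Gamma(1-\alpha)\}^{-1}t^{-\alpha}$, recovers $\RR_0$ and hence $(D_t^\alpha+\lambda)^{-1}\RR_0\equiv1$; applying it with $\phi=\EE_1$ reduces the second identity to verifying $\pt_tJ_t^{1-\alpha}\EE_1=\{\Gamma(1-\alpha)\}^{-1}t^{-1}\rho(\tau/t)$.

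The main obstacle is precisely this computation of $\pt_tJ_t^{1-\alpha}\EE_1$ for the capped power $\EE_1$. For $t\le\tau$ the profile equals the constant $\tau^{\alpha-1}$ throughout $(0,t)$, so the evaluation is immediate and returns $\{\Gamma(1-\alpha)\}^{-1}\tau^{\alpha-1}t^{-\alpha}$, which coincides with $\{\Gamma(1-\alpha)\}^{-1}t^{-1}\rho(\tau/t)$ because $\rho(\tau/t)=(\tau/t)^{-\beta}$ when $\tau/t\ge1$. For $t>\tau$ the convolution $J_t^{1-\alpha}\EE_1$ splits at $s=\tau$ into a constant piece and a piece $\int_\tau^t(t-s)^{-\alpha}s^{\alpha-1}\,ds$; after differentiating in $t$ the boundary contributions at $s=\tau$ cancel, leaving $\tau^{\alpha-1}t^{-\alpha}$ minus $(1-\alpha)$ times the incomplete-Beta integral $t^{-1}\int_0^{1-\tau/t}w^{-\alpha}(1-w)^{\alpha-2}\,dw$. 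This integral telescopes via the antiderivative identity $\frac{d}{dw}[w^{1-\alpha}(1-w)^{\alpha-1}]=(1-\alpha)w^{-\alpha}(1-w)^{\alpha-2}$, and collecting terms gives exactly $\{\Gamma(1-\alpha)\}^{-1}\tau^{\alpha-1}t^{-\alpha}[1-(1-\tau/t)^\beta]=\{\Gamma(1-\alpha)\}^{-1}t^{-1}\rho(\tau/t)$.

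It remains to record the lower bound on $\rho$ displayed in \eqref{barrier_bounds}, which guarantees $\RR_1>0$ for $t>0$ and hence that $M_1(t)$ is finite and the whole construction is admissible. This follows from the Bernoulli inequality $(1-s)^\beta\le1-\beta s$ for $s\in[0,1]$, $\beta\in(0,1)$, giving $1-((1-s)^+)^\beta\ge\min\{\beta s,1\}$ and so $\rho(s)\ge s^{-\beta}\min\{\beta s,1\}$. I note finally that, because $\phi_0$ and $\phi_1$ carry nonzero initial values, the relevant sources are the Riemann--Liouville (not Caputo) images of the barriers; the argument therefore runs through the pointwise bound of Theorem~\ref{the_L2} directly, which also explains why the sharp time-localized supremum $\sup_{0<s\le t}$ is available rather than a supremum over all of $(0,T]$ that a verbatim use of Corollary~\ref{cor1_L2} would produce.
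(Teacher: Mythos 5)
Your proof is correct, but it runs on the ``inverse'' side of the comparison argument, whereas the paper runs on the ``forward'' side. The paper normalizes by linearity, defines the barriers $\EE_0(t):=1$ and $\EE_1(t):=\max\{\tau,t\}^{\alpha-1}$ to \emph{vanish at $t=0$} (so they are discontinuous there), computes their generalized Caputo derivatives via the representation \eqref{aux_lem_int} (Remark~\ref{rem_caputo_alt_def}), checks the forward identities $(D_t^\alpha+\lambda)\EE_p=\RR_p$, and then invokes the comparison principle of Corollary~\ref{cor1_L2}. You instead keep the barriers with their nonzero initial values, work with the Riemann--Liouville operator $\pt_tJ_t^{1-\alpha}$ (whose extra term $\{\Gamma(1-\alpha)\}^{-1}\phi(0^+)t^{-\alpha}$ plays exactly the role of the paper's discontinuous-at-zero device --- the two are the same object in different clothing), and prove the inverse identities $(D_t^\alpha+\lambda)^{-1}\RR_p=\EE_p$ via Mittag-Leffler calculus: the Beta-integral evaluation $(D_t^\alpha+\lambda)^{-1}\bigl[\{\Gamma(1-\alpha)\}^{-1}t^{-\alpha}\bigr]=E_{\alpha,1}(-\lambda t^\alpha)$ and the Caputo inversion formula $(D_t^\alpha+\lambda)^{-1}(D_t^\alpha+\lambda)\phi=\phi-\phi(0^+)E_{\alpha,1}(-\lambda t^\alpha)$; the conclusion then follows from Theorem~\ref{the_L2} together with positivity of the kernel in \eqref{Dlammbda_inv} (Remark~\ref{rem_comparison}) and the Volterra (causal) structure of that kernel. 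Both routes share the same computational core --- the verification that the fractional derivative of $\max\{\tau,t\}^{\alpha-1}$ equals $\{\Gamma(1-\alpha)\}^{-1}t^{-1}\rho(\tau/t)$, which you carry out correctly (your antiderivative identity for $w^{1-\alpha}(1-w)^{\alpha-1}$ checks out and reproduces the paper's answer) --- and the same source of positivity. What the paper's route buys is brevity: it needs only the positivity of $E_{\alpha,\alpha}$, no explicit inversion identities, and no $E_{\alpha,1}$ manipulations. What your route buys is that the time-localized supremum over $(0,t]$ is visibly justified by causality of the kernel (in the paper this is implicit in applying the normalization and Corollary~\ref{cor1_L2} on the horizon $[0,t]$ for each fixed $t$), and you also verify the Bernoulli lower bound $\rho(s)\ge s^{-\beta}\min\{\beta s,1\}$, which the paper states but does not prove. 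Your closing remark slightly overstates the defect of a ``verbatim'' use of Corollary~\ref{cor1_L2}: the paper's localized supremum is legitimate for exactly the causality reason you identify, so this is a difference of explicitness, not of correctness.
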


\begin{corollary}[$u_h(\cdot,0)\neq u_0$]\label{cor_u0}
Suppose that $u_h$ is continuous in $t$ for $t\ge 0$ and does not satisfy $u_h(\cdot,0)= u_0$.
Then  Theorem~\ref{the_L2} and Corollaries~\ref{cor1_L2} and~\ref{cor1_L2_bounds}
are valid
with $R_h(\cdot,t)=[u_h(\cdot, 0)-u_0]\{\Gamma(1-\alpha)\}^{-1}t^{-\alpha}+(D_t^\alpha+\LL)u_h(\cdot,t)-f(\cdot,t)$.
\end{corollary}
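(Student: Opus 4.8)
The plan is to remove the initial-value defect $e_0:=u_h(\cdot,0)-u_0$ by folding it into the residual, and then to recover the \emph{sharp} bound \eqref{L2_error} from the solution representation that already underlies Remark~\ref{rem_comparison}. Writing $R_h^{\,0}:=(D_t^\alpha+\LL)u_h-f$ for the residual of the original theorem, the Caputo derivative and its Riemann--Liouville counterpart differ (cf.\ \eqref{CaputoEquiv}) by $D_t^\alpha v=\pt_t J_t^{1-\alpha}v-v(\cdot,0)\{\Gamma(1-\alpha)\}^{-1}t^{-\alpha}$; applying this to $u$ and $u_h$ and subtracting shows that $e=u_h-u$ satisfies
\beq
\pt_t J_t^{1-\alpha}e+\LL e=R_h,\qquad R_h=R_h^{\,0}+e_0\,\{\Gamma(1-\alpha)\}^{-1}t^{-\alpha},
\eeq
which is exactly the modified residual of the statement, while $\|J_t^{1-\alpha}e(\cdot,t)\|=O(t^{1-\alpha})\to0$ because $u,u_h\in L_\infty(0,t;L_2(\Omega))$ and $u_h$ is continuous at $t=0$. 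Thus the initial discrepancy has been transferred into $R_h$, and $e$ carries vanishing $J_t^{1-\alpha}$-data.

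Next I would represent $e$ by the PDE analogue of \eqref{Dlammbda_inv}, with $\lambda$ replaced by $\LL$. The contribution of $R_h^{\,0}$ is $\int_0^t(t-s)^{\alpha-1}E_{\alpha,\alpha}(-(t-s)^\alpha\LL)R_h^{\,0}(\cdot,s)\,ds$, and the contribution of the initial data is the propagator $E_{\alpha,1}(-t^\alpha\LL)e_0$. The key step is the identity
\beq
E_{\alpha,1}(-t^\alpha\LL)\,e_0=\int_0^t(t-s)^{\alpha-1}E_{\alpha,\alpha}(-(t-s)^\alpha\LL)\,\{\Gamma(1-\alpha)\}^{-1}s^{-\alpha}e_0\,ds,
\eeq
the operator lift of the scalar relation $(D_t^\alpha+\lambda)^{-1}[\{\Gamma(1-\alpha)\}^{-1}t^{-\alpha}]=E_{\alpha,1}(-\lambda t^\alpha)$, which one checks by termwise convolution of the $E_{\alpha,\alpha}$-series. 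This merges both contributions into the single integral $e(\cdot,t)=\int_0^t(t-s)^{\alpha-1}E_{\alpha,\alpha}(-(t-s)^\alpha\LL)R_h(\cdot,s)\,ds$ with source the full $R_h$. Taking $\|\cdot\|$ and using the coercivity $\langle\LL v,v\rangle\ge\lambda\|v\|^2$ together with the positivity of $E_{\alpha,\alpha}$ from Remark~\ref{rem_comparison}---which give the operator-norm domination $\|E_{\alpha,\alpha}(-(t-s)^\alpha\LL)v\|\le E_{\alpha,\alpha}(-\lambda(t-s)^\alpha)\|v\|$---yields $\|e(\cdot,t)\|\le(D_t^\alpha+\lambda)^{-1}\|R_h(\cdot,t)\|$, i.e.\ \eqref{L2_error} with the new $R_h$. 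Corollaries~\ref{cor1_L2} and~\ref{cor1_L2_bounds} then follow verbatim, since their proofs use only \eqref{L2_error} and the signs of $\RR_0,\RR_1$, never the condition $u_h(\cdot,0)=u_0$.

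The step I expect to be the main obstacle is precisely this merging identity, and the reason it cannot be replaced by a one-line triangle inequality. Handling the defect separately---bounding $\|E_{\alpha,1}(-t^\alpha\LL)e_0\|\le\|e_0\|E_{\alpha,1}(-\lambda t^\alpha)$ and adding---delivers only $\|e(\cdot,t)\|\le(D_t^\alpha+\lambda)^{-1}\big[\,\|R_h^{\,0}\|+\|e_0\|\{\Gamma(1-\alpha)\}^{-1}t^{-\alpha}\,\big]$, which is strictly weaker because $\|R_h\|\le\|R_h^{\,0}\|+\|e_0\|\{\Gamma(1-\alpha)\}^{-1}t^{-\alpha}$ runs the triangle inequality the wrong way; recovering the stated bound therefore requires keeping the two source terms inside a single norm, which is exactly what the convolution representation accomplishes. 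The remaining analytic point is the operator-norm domination for a possibly non-self-adjoint $\LL$: here one represents $E_{\alpha,\alpha}(-(t-s)^\alpha\LL)$ by subordination against the semigroup generated by $-\LL$, for which coercivity gives $\|e^{-\tau\LL}\|\le e^{-\lambda\tau}$, and the nonnegativity of the subordination weight (equivalently, the complete monotonicity of $x\mapsto E_{\alpha,\alpha}(-x)$) transfers this bound to the Mittag--Leffler propagator.
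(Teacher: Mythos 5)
Your proposal is correct in substance, but after its first step it takes a genuinely different, and much heavier, route than the paper. The first step is, in different language, exactly the paper's computation: your use of $D_t^\alpha v=\pt_t J_t^{1-\alpha}v-v(\cdot,0)\{\Gamma(1-\alpha)\}^{-1}t^{-\alpha}$ to show that the defect $e_0:=u_h(\cdot,0)-u_0$ contributes precisely $e_0\{\Gamma(1-\alpha)\}^{-1}t^{-\alpha}$ to the residual corresponds to the paper resetting $u_h(\cdot,0):=u_0$ and differentiating the resulting jump at $t=0$ via the generalized Caputo derivative of Remark~\ref{rem_caputo_alt_def}, where $D_t^\alpha\EE_0=\{\Gamma(1-\alpha)\}^{-1}t^{-\alpha}$. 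From there, however, the paper is essentially done: the reset $u_h$ has error vanishing at $t=0$, and the proofs of Theorem~\ref{the_L2} and Corollaries~\ref{cor1_L2} and~\ref{cor1_L2_bounds} (Lemma~\ref{lem_aux} through the representation \eqref{aux_lem_int}, plus the scalar comparison of Remark~\ref{rem_comparison}) remain valid for functions right-discontinuous at $t=0$, so all three results apply verbatim with the modified $R_h$; the initial-data and source contributions are never separated, so nothing has to be merged and sharpness is automatic. You instead rebuild \eqref{L2_error} from scratch via an operator-valued Mittag--Leffler representation of $e$, the merging identity, and subordination to the semigroup $e^{-\tau\LL}$. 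That chain is sound --- the merging identity checks out termwise by the Beta-function evaluation, and complete monotonicity does transfer $\|e^{-\tau\LL}\|\le e^{-\lambda\tau}$ to the propagators --- and it buys an explicit solution formula together with a clear diagnosis of why the naive triangle inequality only yields the weaker bound with $\|R_h^{\,0}\|+\|e_0\|\{\Gamma(1-\alpha)\}^{-1}t^{-\alpha}$ in place of $\|R_h\|$. But it costs considerably more: you must make sense of $E_{\alpha,\alpha}(-t^\alpha\LL)$ for an unbounded, possibly non-self-adjoint $\LL$, and, more seriously, prove that $e$ actually equals your representation formula (i.e.\ well-posedness/uniqueness of the fractional evolution problem) under the theorem's weak regularity hypotheses, whereas the paper's energy-plus-comparison argument needs only the coercivity $\langle\LL v,v\rangle\ge\lambda\|v\|^2$. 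In short, the obstacle you single out --- the merging identity --- is self-inflicted by the representation route; note also the small inaccuracy that the paper's corollaries are proved from \eqref{e_eq} and Remark~\ref{rem_comparison} rather than from \eqref{L2_error} itself, though, as you implicitly use, they can be recovered from \eqref{L2_error} by the positivity of the kernel in \eqref{Dlammbda_inv}.
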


\begin{remark}[$\RR_0$ v $\RR_1$]\label{rem_R_01}
If  uniform-in-time accuracy is targeted, then
the first bound in \eqref{barrier_bounds_a}, with the residual barrier $\RR_0$, is to be employed.
The second bound, with $\RR_1$, is less intuitive. It may be viewed as
an a posteriori analogue of
 pointwise-in-time a priori error bounds of type \cite[(3.2)]{NK_XM} and
 \cite[(4.2)]{NK_L2} on graded meshes $\{T(j/M)^r\}_{j=0}^M$.
 Let $q$ denote the order of the method (with $q=2$ for the L1 method).
 The latter bounds show (for three discretizations) that
 the error behaves as $t^{{\alpha-1}}M^{-r}$ %(compare with \eqref{barrier_bounds_a}!)
 for $1\le r\le q-\alpha$ (with a logarithmic factor for $r=q-\alpha$),
while the optimal convergence rate  $q-\alpha$ in positive time is attained
 if $r\approx q-\alpha$.
 Hence, it is reasonable to expect that an adaptive algorithm using
%appropriately-normalized
residual barriers $\RR_0$ and $\RR_1$
  will respectively yield optimal convergence rates $q-\alpha$ globally or in positive time.
  This agrees, and remarkably well, with
 the numerical results in \S\ref{sec4}  for the L1 method, and in \cite{SF_NK} for a number of higher-order methods.
\end{remark}

\begin{remark}[$\LL u_0\not\in L_2(\Omega)$]\label{rem_u0_nonreg}
If $u_0$ is not sufficiently smooth (see, e.g., test problem C in \S\ref{ssec_num_par}),
then (depending on the interpolation of $u_h$ in time) the residual $R_h(\cdot,t)$ on the first time interval $(0,t_1)$ may fail to be in $L_2(\Omega)$.
One way to rectify this is to reset $u_h(\cdot,t):=u_h(\cdot, t_1)$ for $t\in (0,t_1]$.
With this modification, all above results become applicable. Importantly, all changes in $u_h$ need to be reflected when computing its residual $R_h$;
in particular, as $u_h$ has been made discontinuous at $t=0$,
Corollary~\ref{cor_u0} is to be employed.
\end{remark}

The remainder of this section is devoted to the proofs of the above results.
The key role %in the analysis
will be played by the following auxiliary lemma,
a discrete version of which has been useful in the a priori error analysis; see, e.g., in \cite[(3.4)]{NK_MC_L1}.

 \begin{lemma}\label{lem_aux}
Suppose that $v(\cdot,0)=0$ and
$v\in L_{\infty}(0,t;\,L_2(\Omega)) \cap W^{1,\infty}(\epsilon,t;\,L_2(\Omega))$ for any $0<\epsilon<t\le T$.
%$v\in C[0,T]\cap  C^{0,1}(0,T]$
%{\color{red} where $C^{0,1}(0,T]$ is the H\"older space [to apply to  $u_h$]}
%
Then
$$
%v(t)\,
%\bigl (D_t^\alpha v(t)\bigr)\,v(t)  \ge \bigl( D_t^\alpha |v(t)|\bigr)\,|v(t)|
%\quad
\langle  D_t^\alpha v(\cdot,t),\,v(\cdot,t) \rangle \ge \bigl (D_t^\alpha \|v(\cdot,t)\|\bigr)\|v(\cdot,t)\|\qquad\mbox{for}\;\;t>0.
$$
\end{lemma}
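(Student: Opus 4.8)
The plan is to reduce the inequality to a pointwise Cauchy--Schwarz estimate after first rewriting the Caputo derivative in a nonlocal difference form that interacts cleanly with the inner product. Since $v(\cdot,0)=0$, I would integrate by parts in \eqref{CaputoEquiv}, writing $\pt_s v(\cdot,s)=\pt_s[v(\cdot,s)-v(\cdot,t)]$ so as to tame the singular kernel at $s=t$, and obtain the representation
$$
\Gamma(1-\alpha)\,D_t^\alpha v(\cdot,t)=t^{-\alpha}v(\cdot,t)+\alpha\int_0^t (t-s)^{-\alpha-1}\bigl[v(\cdot,t)-v(\cdot,s)\bigr]\,ds.
$$
The same identity, applied to the scalar function $g(s):=\|v(\cdot,s)\|$ (which likewise satisfies $g(0)=0$), yields the analogous formula for $D_t^\alpha\|v(\cdot,t)\|$.

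Next I would take the $L_2(\Omega)$ inner product of the first identity with $v(\cdot,t)$ and multiply the scalar identity by $\|v(\cdot,t)\|$. Subtracting, the leading $t^{-\alpha}\|v(\cdot,t)\|^2$ terms cancel, so that $\Gamma(1-\alpha)\bigl[\langle D_t^\alpha v,\,v\rangle-(D_t^\alpha\|v\|)\,\|v\|\bigr](\cdot,t)$ reduces to
$$
\alpha\int_0^t (t-s)^{-\alpha-1}\Bigl[\,\|v(\cdot,s)\|\,\|v(\cdot,t)\|-\langle v(\cdot,s),\,v(\cdot,t)\rangle\,\Bigr]\,ds,
$$
once one expands $\langle v(\cdot,t)-v(\cdot,s),\,v(\cdot,t)\rangle=\|v(\cdot,t)\|^2-\langle v(\cdot,s),\,v(\cdot,t)\rangle$ and treats the scalar term similarly. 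Each integrand is nonnegative: the kernel $(t-s)^{-\alpha-1}$ is positive, while the bracket is $\ge0$ by Cauchy--Schwarz, $\langle v(\cdot,s),\,v(\cdot,t)\rangle\le\|v(\cdot,s)\|\,\|v(\cdot,t)\|$. Hence the integral is nonnegative and the claimed inequality follows.

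The main obstacle is making the integration by parts rigorous, i.e.\ justifying the boundary terms and the convergence of the resulting singular integrals. The regularity hypotheses are exactly what is needed: $v\in W^{1,\infty}(\epsilon,t;L_2(\Omega))$ gives $\|v(\cdot,t)-v(\cdot,s)\|\lesssim(t-s)$ near $s=t$, so the boundary contribution there vanishes (as $(t-s)^{-\alpha}\|v(\cdot,t)-v(\cdot,s)\|\to0$) and the integrand behaves like $(t-s)^{-\alpha}$, which is integrable; meanwhile $v\in L_\infty(0,t;L_2(\Omega))$ together with $v(\cdot,0)=0$ controls the contribution near $s=0$ and produces the boundary value $t^{-\alpha}v(\cdot,t)$. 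A secondary point to verify is that $g(s)=\|v(\cdot,s)\|$ inherits enough regularity (local Lipschitz continuity in $t$, from the reverse triangle inequality applied to that of $v$) for its Caputo derivative and the difference representation to be valid. Note that working throughout with the finite differences $g(t)-g(s)$ avoids ever dividing by $\|v\|$, so no difficulty arises at times where the norm vanishes.
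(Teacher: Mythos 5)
Your proposal is correct and follows essentially the same route as the paper: the same integration-by-parts representation $\Gamma(1-\alpha)\,D_t^\alpha v(\cdot,t)=t^{-\alpha}v(\cdot,t)+\alpha\int_0^t (t-s)^{-\alpha-1}[v(\cdot,t)-v(\cdot,s)]\,ds$, followed by taking the inner product with $v(\cdot,t)$ and invoking Cauchy--Schwarz, with the same regularity considerations justifying the singular integrals for both $v$ and $\|v\|$. Your variant of subtracting the scalar identity (for $g=\|v\|$) from the vector one before applying Cauchy--Schwarz, rather than bounding the inner product termwise, is only a cosmetic reorganization of the paper's argument.
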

%\newpage

\begin{proof}
In view of \eqref{CaputoEquiv}, replacing $\pt_s v(\cdot,s)$ in $D_t^\alpha v(\cdot,t)$ by $\pt_s\{v(\cdot,s)- v(\cdot, t)\}$ and then integrating by parts (with $v(\cdot,0)=0$), one gets
\beq\label{aux_lem_int}
\Gamma(1-\alpha)\,D_t^\alpha v(\cdot,t)%=\int_{0}^t(t-s)^{-\alpha}\, \pt_s[v(s)- v( t)]\, ds
=t^{-\alpha} v(\cdot,t)+\int_{0}^t\!\alpha(t-s)^{-\alpha-1}\, \{v(\cdot,t)- v(\cdot, s)\}\, ds.
\eeq
It remains to take the inner product of \eqref{aux_lem_int} with $v(\cdot,t)$. Then in the right-hand side $v(\cdot\,t)$ becomes $\|v(\cdot,t)\|^2$, while
$\langle v(\cdot,t)-v(\cdot,s),\,v(\cdot,t)\rangle\ge \{\|v(\cdot,t)\|-\|v(\cdot\,s)\|\}\|v(\cdot,t)\|$, so the desired assertion follows.
Note that the inner product of \eqref{aux_lem_int} with $v(\cdot,t)$ is well-defined, in view of $\|v(t)-v(s)\|\le C_t(t-s)$ for any fixed $t>0$ (with a $t$-dependent constant $C_t$).
Similarly, a version of
\eqref{aux_lem_int} for
$D_t^\alpha \|v(\cdot,t)\|$
remains well-defined
%if $\{v(t)-v(s)\}$    is replaced by $\{|v(t)|-|v(s)|\}$
%, the resulting integral remains well-defined, as
as $|\,\|v(\cdot,t)\|-\|v(\cdot,s)\|\,|\le \|v(\cdot,t)-v(\cdot,s)\|$.
%So one also gets a version of \eqref{aux_lem_int} for $D_t^\alpha |v|(t)$.
%
\end{proof}

\begin{remark}\label{rem_caputo_alt_def}
One may consider \eqref{aux_lem_int} an alternative definition of $D_t^\alpha$ (with an obvious modification for the case $v(\cdot,0)\neq 0$; see also \cite{Bunner_etal_2015}),
which can be applied to less smooth functions, including functions discontinuous at $t=0$.
Consider $\EE_0(t):=1$ for $t>0$ with $\EE_0(0):=0$.
Then, a %straightforward
calculation using~\eqref{aux_lem_int} yields $\Gamma(1-\alpha)\,D_t^\alpha \EE_0(t)=t^{-\alpha}$.
The same result may be obtained using the original definition \eqref{CaputoEquiv} combined with $\pt_t \EE_0(t)=\delta(t)$, the Dirac delta-function, or representing $\EE_0$ as the limit of a sequence of continuous piecewise-linear functions
(similarly to \cite[Remark 2.4]{NK_TL_sinum}).
\end{remark}

\noindent{\it Proof of Theorem~\ref{the_L2}.}~
Set $e:=u_h-u$. Then $e(\cdot,0)=0$ and
$(D_t^\alpha+\LL) e(\cdot,t)=R_h(\cdot,t)$ for $t>0$ subject to
$e=0$ on $\pt\Omega$. Taking the inner product of this equation with $e(\cdot,t)$, then applying Lemma~\ref{lem_aux} and
$\langle \LL e,e\rangle\ge \lambda\|e\|^2$,
one arrives at
\beq\label{e_eq}
(D_t^\alpha+\lambda) \|e(\cdot,t)\|\le \|R_h(\cdot,t)\| \qquad\mbox{for}\;\; t>0.
\eeq
Now, in view of Remark~\ref{rem_comparison},
 $(D_t^\alpha+\lambda)\bigl\{(D_t^\alpha+\lambda)^{-1}\|R_h(\cdot,t)\|-\|e(\cdot,t)\|\bigr\}\ge 0$ yields
 the desired bound \eqref{L2_error}.
\hfill$\square$
\bigskip

\noindent{\it Proof of Corollary~\ref{cor1_L2}.}~
First, suppose that $\EE(0)=0$. Then,  by \eqref{e_eq} combined with the corollary hypothesis,
$(D_t^\alpha+\lambda)(\EE(t)-\|e(\cdot,t)\|)\ge 0$ subject to $\EE(0)-\|e(\cdot,0)\|=0$.
In view of Remark~\ref{rem_comparison}, this
immediately yields the desired assertion $\EE(t)-\|e(\cdot,t)\|\ge 0$.
Otherwise, if $\EE(0)>0$, then $\EE(t)-\|e(\cdot,t)\|$ will include an additional positive component
$\EE(0)\, E_{\alpha,1}(-\lambda t^\alpha)$, so
$\EE(t)-\|e(\cdot,t)\|$ will remain positive.
\hfill$\square$
\bigskip

\noindent{\it Proof of Corollary~\ref{cor1_L2_bounds}.}~
As all operators are linear, it suffices to prove \eqref{barrier_bounds} with the $\sup\{\cdot\}$ terms equal to $1$, i.e. for $\|R_h(\cdot,t)\|\le \RR_p(t)$, $p=0,1$.

For the first bound in \eqref{barrier_bounds_a}, recall
from Remark~\ref{rem_caputo_alt_def} that
 for the function $\EE_0(t):=1$ for $t>0$ with $\EE_0(0):=0$ one has
 $(D_t^\alpha+\lambda)\EE_0(t)=\RR_0(t)$.
So an application of Corollary~\ref{cor1_L2} with $\EE(t):=\EE_0(t)$ yields
the first desired bound  $\|e(\cdot,t)\|\le \EE_0(t)=1$ for $t>0$.

For the second bound in \eqref{barrier_bounds_a},
set $\EE_1(t):=\max\{\tau, t\}^{\alpha-1}$ for $t>0$ with $\EE_1(0):=0$
(a similar barrier was used in \cite[Appendix~A]{NK_MC_L1}, \cite[Lemma 2.3]{NK_XM}).
Now it suffices to check that $ D_t^\alpha\EE_1(t)=\{\Gamma(1-\alpha)\}^{-1}\,t^{-1}\rho(\tau/t)$, as then
$(D_t^\alpha+\lambda)\EE_1(t)= \RR_1(t)\ge \|R_h\|$,
so an application of Corollary~\ref{cor1_L2} immediately
yields the desired bound
$\|e(\cdot,t)\|\le \EE_1(t)\le t^{\alpha-1}$.

To evaluate $ D_t^\alpha\EE_1(t)$,
set %$\color{magenta}\beta:=1-\alpha$ and
$\hat\tau:=\tau/t$, and
note that
$\EE_1(t)=\tau^{-\beta}\EE_0(t) - (\tau^{-\beta}-t^{-\beta})^+$.
Then for $t\le \tau$, i.e. $\hat\tau\ge 1$, one has
$\EE_1(t):=\tau^{-\beta}\EE_0(t)$, so $\Gamma(1-\alpha) D_t^\alpha \EE_1(t)=\tau^{-\beta}t^{-\alpha}=t^{-1}\hat\tau^{-\beta}=t^{-1}\rho(\hat\tau)$
as required.
For $t> \tau$, i.e. $\hat\tau\in(0,1)$, note that
$\pt_s (\tau^{-\beta}-s^{-\beta})^+ =-\pt_s(s^{-\beta})
=\beta s^{-\beta-1} $, so\vspace{-0.15cm}
$$
\Gamma(1-\alpha)\,D^\alpha_t \EE_1(t)=t^{-1}\hat\tau^{-\beta}
-\beta\int_{\tau}^t\! s^{-\beta-1}(t-s)^{-\alpha}\,ds
=t^{-1}\hat\tau^{-\beta}\bigl[1-(1-\hat\tau)^\beta\bigr],\vspace{-0.15cm}
$$
so we again get $\Gamma(1-\alpha)\,D^\alpha_t \EE_1(t)=t^{-1}\rho(\hat\tau)$. So
indeed, $  D_t^\alpha\EE_1(t)=\{\Gamma(1-\alpha)\}^{-1}\,t^{-1}\rho(\tau/t)$ for any $t>0$, as required.
\hfill$\square$\bigskip

\noindent{\it Proof of Corollary~\ref{cor_u0}.}~
Theorem~\ref{the_L2} and its two corollaries immediately apply to
 $u_h$ once it is reset to $u_0$ at $t=0$
(after which, it is worth noting, $u_h$ becomes right-discontinuous at $t=0$).
However, this modification of $u_h$ needs to be reflected in the computation of the residual  $R_h$ as follows.
Given $u_h$, continuous in $t$ for $t\ge 0$,
set $\bar u_h:=u_h$, and then reset $u_h(\cdot,0):=u_0$ (so $\bar u_h$ is continuous for $t\ge 0$, while $u_h$ is continuous and equal to $\bar u_h$ for $t>0$).
Now,
the residual $R_h$ of $u_h$ for $t>0$ is computed using
$u_h=[u_h(\cdot, 0^+)-u_0](\EE_0-1)+\bar u_h$,
so
 $D_t^\alpha u_h=[u_h(\cdot, 0^+)-u_0]D_t^\alpha \EE_0+D_t^\alpha \bar u_h$, where
 $D_t^\alpha \EE_0=\{\Gamma(1-\alpha)\}^{-1}t^{-\alpha}$; see
 Remark~\ref{rem_caputo_alt_def}.
 In other words, to compensate for $u_h(\cdot,0^+)\neq u_0$, one needs to add $[u_h(\cdot, 0^+)-u_0]\{\Gamma(1-\alpha)\}^{-1}t^{-\alpha}$
to $R_h$ of Theorem~\ref{the_L2}.
\hfill$\square$
%\vspace{-0.3cm}

 \section{Generalization for the $L_\infty(\Omega)$ norm}

 Let %the spatial operator $\LL$  be
 $\LL u := \sum_{k=1}^d \Bigl\{-%\pt_{x_k}\!(
 a_k(x)\,\pt^2_{x_k}\!u%)
 + b_k(x)\, \pt_{x_k}\!u \Bigr\}+c(x)\, u$
 in \eqref{problem},
with sufficiently smooth coefficients $\{a_k\}$, $\{b_k\}$ and $c$ in $C(\bar\Omega)$, for which we assume that $a_k>0$ in $\bar\Omega$,
and also $c\ge \lambda\ge 0$ (while $\langle \LL v,v\rangle\ge \lambda\|v\|^2$ is not required in this section).

\begin{lemma}[maximum/comparison principle]\label{lem_max_pr}
Suppose that $v(x,t)\ge 0$ for $t=0$ and $x\in\pt\Omega$,
and $v$ is in
$ C(\bar\Omega\times[0,t]) \cap W^{1,\infty}(\epsilon,t;\,L_\infty(\Omega))$ for any $0<\epsilon<t\le T$
and also in
$C^2(\Omega)$ for any $t>0$.
Then $(D_t^\alpha+\LL)v\ge 0$ in $(0,T]\times\Omega$ implies $v\ge 0$ in $[0,T]\times\bar\Omega$.
\end{lemma}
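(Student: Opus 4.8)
The plan is to argue by contradiction through a pointwise extremum argument, combining the classical elliptic maximum principle in space with a sign property of the Caputo derivative in time. Since $v$ is continuous on the compact set $\bar\Omega\times[0,T]$, it attains a minimum $m:=\min_{\bar\Omega\times[0,T]}v$ there. I would assume, for contradiction, that $m<0$. Because $v\ge0$ for $t=0$ and on $\pt\Omega$, this minimum cannot be attained on the parabolic boundary; hence there exist an interior point $x_0\in\Omega$ and a time $t_0>0$ with $v(x_0,t_0)=m<0$, while $v\ge m$ everywhere.

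At such a point the spatial part is handled by elementary calculus. Since $x_0$ is an interior minimizer of $v(\cdot,t_0)\in C^2(\Omega)$, one has $\pt_{x_k}v(x_0,t_0)=0$ and $\pt^2_{x_k}v(x_0,t_0)\ge0$ for each $k$. Using $a_k>0$ and $c\ge\lambda\ge0$ together with $m<0$, every term of $\LL v(x_0,t_0)=\sum_k\{-a_k\pt^2_{x_k}v+b_k\pt_{x_k}v\}+cv$ is nonpositive, so $\LL v(x_0,t_0)\le0$.

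For the temporal part I would use the representation \eqref{aux_lem_int}, in the form valid for $v(\cdot,0)\ne0$ indicated in Remark~\ref{rem_caputo_alt_def}, namely
\beq
\Gamma(1-\alpha)\,D_t^\alpha v(x_0,t_0)=t_0^{-\alpha}\{v(x_0,t_0)-v(x_0,0)\}+\int_0^{t_0}\!\alpha(t_0-s)^{-\alpha-1}\{v(x_0,t_0)-v(x_0,s)\}\,ds.
\eeq
The first term equals $t_0^{-\alpha}\{m-v(x_0,0)\}$ and is strictly negative, since $v(x_0,0)\ge0>m$; the integrand is nonpositive for every $s\in[0,t_0]$ because $v(x_0,t_0)=m$ is the global minimum. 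Hence $D_t^\alpha v(x_0,t_0)<0$. Adding this to $\LL v(x_0,t_0)\le0$ yields $(D_t^\alpha+\LL)v(x_0,t_0)<0$, contradicting the hypothesis $(D_t^\alpha+\LL)v\ge0$ in $(0,T]\times\Omega$. Therefore $m\ge0$, i.e. $v\ge0$ on $[0,T]\times\bar\Omega$.

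The main obstacle is to ensure that the pointwise representation above is legitimate under the stated low regularity. Here the assumption $v\in W^{1,\infty}(\epsilon,t;L_\infty(\Omega))$ supplies the Lipschitz-in-time bound $|v(x_0,t_0)-v(x_0,s)|\le C_{t_0}(t_0-s)$, which makes the kernel $(t_0-s)^{-\alpha-1}$ integrable near $s=t_0$, while continuity up to $t=0$ controls the integrand near $s=0$; together these justify evaluating \eqref{aux_lem_int} at the interior point $(x_0,t_0)$ and make the extremum argument rigorous. A secondary point worth recording is that the strict inequality is delivered entirely by the initial-data term, so no strong maximum principle nor any nondegeneracy of the integral term is required.
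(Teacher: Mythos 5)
Your proof is correct, but it takes a genuinely different (more self-contained) route than the paper. The paper does not write out an argument at all: it cites Luchko's Theorem~2, observes that the stronger hypothesis there ($v(x,\cdot)\in C^1(0,T]\cap W^{1,1}(0,T)$) is used only to invoke Luchko's Theorem~1 (the extremum principle for $D_t^\alpha$), and notes that the proof of that ingredient rests on a representation of type \eqref{aux_lem_int} and therefore survives the lemma's weaker assumptions. What you have done is reconstruct the underlying argument in full: the contradiction at an interior minimum $(x_0,t_0)$, the elementary elliptic inequality $\LL v(x_0,t_0)\le 0$ (using $a_k>0$, $\pt_{x_k}v=0$, $\pt^2_{x_k}v\ge 0$, $c\ge 0$ and $m<0$, valid here because $\LL$ contains no mixed derivatives), and the sign of the Caputo derivative at a temporal minimum via \eqref{aux_lem_int} extended to $v(\cdot,0)\neq 0$. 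Two features of your write-up deserve mention. First, you extract the strict inequality needed for the contradiction entirely from the initial-data term $t_0^{-\alpha}\{m-v(x_0,0)\}<0$ (since $v(x_0,0)\ge 0>m$); this avoids the auxiliary perturbation function used in Luchko's original proof and, as you note, requires no strong maximum principle or nondegeneracy of the integral term. Second, your closing paragraph—Lipschitz continuity on $[\epsilon,t_0]$ making the kernel $(t_0-s)^{-\alpha-1}$ integrable near $s=t_0$, and continuity up to $t=0$ bounding the integrand near $s=0$—is precisely the ``inspection'' that the paper leaves implicit: it is exactly where the hypotheses $C(\bar\Omega\times[0,t])\cap W^{1,\infty}(\epsilon,t;\,L_\infty(\Omega))$ enter, and why the regularity can be weakened relative to Luchko. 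In short, the citation proof buys brevity; yours buys independence from the literature and makes transparent both the mechanism of the comparison principle and the role of the relaxed regularity.
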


\begin{proof}
 This result is given in \cite[Theorem~2]{luchko} under a stronger condition  that $v(x,\cdot)\in C^1(0,T]\cap W^{1,1}(0,T)$.
 An inspection of the proof shows that this condition is only required to apply \cite[Theorem~1]{luchko}
 (the maximum principle for $D_t^\alpha$).
 The proof of the latter  relies on the representation of type \eqref{aux_lem_int}
 and remains valid under our weaker assumptions.
 (A~similar, but not identical, result is also  given in \cite[Theorem~4.1]{Bunner_etal_2015}.)
\end{proof}

\begin{theorem}
 Under the above assumptions on $\LL$,
 let a unique solution $u$ of \eqref{problem} and its approximation $u_h$ be
in $C(\bar\Omega\times[0,t]) \cap W^{1,\infty}(\epsilon,t;\,L_\infty(\Omega))$ for any $0<\epsilon<t\le T$,
and also in
$C^2(\Omega)$ for any $t>0$.
Then the error bounds of Theorem~\ref{the_L2} %(bound \eqref{L2_error})
and Corollaries~\ref{cor1_L2} and~\ref{cor1_L2_bounds} remain true with $\|\cdot\|=\|\cdot\|_{L_2(\Omega)}$
replaced by $\|\cdot\|_{L_\infty(\Omega)}$.
\end{theorem}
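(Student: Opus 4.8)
The plan is to replace the $L_2$ energy argument of Theorem~\ref{the_L2}, which relied on the inner product and Lemma~\ref{lem_aux}, by a two-sided barrier argument built on the maximum/comparison principle of Lemma~\ref{lem_max_pr}. Since Corollaries~\ref{cor1_L2} and~\ref{cor1_L2_bounds} were deduced from the master bound \eqref{L2_error} using only the linearity and order-preserving property of $(D_t^\alpha+\lambda)^{-1}$ (Remark~\ref{rem_comparison}) together with the norm-independent evaluations of $D_t^\alpha\EE_0$ and $D_t^\alpha\EE_1$, it suffices to re-establish \eqref{L2_error} in the new norm, i.e. $\|e(\cdot,t)\|_{L_\infty(\Omega)}\le (D_t^\alpha+\lambda)^{-1}\|R_h(\cdot,t)\|_{L_\infty(\Omega)}$ for $e:=u_h-u$; both corollaries then carry over verbatim (for the order-preserving step one uses $\|e\|_{L_\infty(\Omega)}\le(D_t^\alpha+\lambda)^{-1}\|R_h\|_{L_\infty(\Omega)}\le(D_t^\alpha+\lambda)^{-1}(D_t^\alpha+\lambda)\EE=\EE$ whenever $\|R_h\|_{L_\infty(\Omega)}\le(D_t^\alpha+\lambda)\EE$ with $\EE(0)=0$).

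To prove the master bound I would set $\EE(t):=(D_t^\alpha+\lambda)^{-1}\|R_h(\cdot,t)\|_{L_\infty(\Omega)}$, which by Remark~\ref{rem_comparison} is nonnegative and satisfies $(D_t^\alpha+\lambda)\EE=\|R_h(\cdot,t)\|_{L_\infty(\Omega)}$ with $\EE(0)=0$, and then introduce the comparison functions $w^{\pm}(x,t):=\EE(t)\pm e(x,t)$. At $t=0$ one has $w^{\pm}=0$ (since $\EE(0)=0$ and $e(\cdot,0)=0$), and on $\pt\Omega$ one has $w^{\pm}=\EE(t)\ge0$ (since $e=0$ there), so the initial and boundary hypotheses of Lemma~\ref{lem_max_pr} are met; the spatial regularity required of $w^{\pm}$ is immediate because $\EE$ is independent of $x$, while its time regularity is inherited from the smoothing of $(D_t^\alpha+\lambda)^{-1}$ acting on the bounded datum $\|R_h(\cdot,t)\|_{L_\infty(\Omega)}$.

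The crux is the differential inequality $(D_t^\alpha+\LL)w^{\pm}\ge0$. As $\EE$ is constant in $x$, the first- and second-order spatial terms of $\LL$ annihilate it and $\LL\EE=c(x)\,\EE$, whence $(D_t^\alpha+\LL)\EE=D_t^\alpha\EE+c(x)\EE=\|R_h(\cdot,t)\|_{L_\infty(\Omega)}+(c(x)-\lambda)\EE\ge\|R_h(\cdot,t)\|_{L_\infty(\Omega)}$, the last step using $c\ge\lambda\ge0$ and $\EE\ge0$. Combined with $(D_t^\alpha+\LL)e=R_h$ this gives $(D_t^\alpha+\LL)w^{\pm}\ge\|R_h(\cdot,t)\|_{L_\infty(\Omega)}\pm R_h(x,t)\ge0$ pointwise, so Lemma~\ref{lem_max_pr} forces $w^{\pm}\ge0$, i.e. $|e(x,t)|\le\EE(t)$, as required. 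I expect this step to be the main obstacle: it is exactly here that the $L_2$ coercivity $\langle\LL v,v\rangle\ge\lambda\|v\|^2$ must be traded for the pointwise sign condition $c\ge\lambda\ge0$ on the zeroth-order coefficient, which is what allows the extra term $(c(x)-\lambda)\EE$ to be absorbed with the correct sign, and one must confirm that the barrier $w^{\pm}$ genuinely satisfies the hypotheses of Lemma~\ref{lem_max_pr}.
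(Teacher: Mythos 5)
Your proposal is correct, and its analytic engine is exactly the paper's: the comparison functions $\EE(t)\pm e(x,t)$, the observation that an $x$-independent nonnegative barrier satisfies $\LL\EE=c\,\EE\ge\lambda\,\EE$ (the precise point where $L_2$ coercivity is traded for $c\ge\lambda\ge0$), and Lemma~\ref{lem_max_pr}. What differs is the logical ordering. You prove the master bound \eqref{L2_error} first, with the specific barrier $\EE:=(D_t^\alpha+\lambda)^{-1}\|R_h(\cdot,t)\|_{L_\infty(\Omega)}$, and then push both corollaries through the monotonicity of $(D_t^\alpha+\lambda)^{-1}$. The paper reverses this: it runs the max-principle argument once for a \emph{general} barrier $\EE$ satisfying $\|R_h(\cdot,t)\|_{L_\infty(\Omega)}\le(D_t^\alpha+\lambda)\EE(t)$ — i.e.\ it proves the $L_\infty$ version of Corollary~\ref{cor1_L2} directly — and then obtains the theorem as the special case $\EE:=(D_t^\alpha+\lambda)^{-1}\|R_h\|_{L_\infty(\Omega)}$ and Corollary~\ref{cor1_L2_bounds} with $\EE:=\EE_p$, $p=0,1$. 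The paper's ordering buys something concrete: your route to Corollary~\ref{cor1_L2_bounds} rests on the identity $(D_t^\alpha+\lambda)^{-1}(D_t^\alpha+\lambda)\EE=\EE$, which is standard for smooth $\EE$ vanishing at $t=0$, but the barriers $\EE_0,\EE_1$ actually used there jump at $t=0$ and their $D_t^\alpha$ is the generalized derivative of Remark~\ref{rem_caputo_alt_def}; the identity does hold — e.g.\ a Mittag-Leffler series computation gives $(D_t^\alpha+\lambda)^{-1}\bigl[\{\Gamma(1-\alpha)\}^{-1}t^{-\alpha}+\lambda\bigr]=1$ for $t>0$ — but it is an extra verification that your write-up asserts rather than proves, and your restriction to $\EE(0)=0$ also silently drops the $\EE(0)>0$ case of Corollary~\ref{cor1_L2}, which the paper handles via the additional term $\EE(0)\,E_{\alpha,1}(-\lambda t^\alpha)$. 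The cheapest repair is already contained in your own argument: your crux inequality uses only $(D_t^\alpha+\lambda)\EE\ge\|R_h\|_{L_\infty(\Omega)}$, never equality, so stating the max-principle step for a general such $\EE$ yields Corollary~\ref{cor1_L2} directly and makes everything else a one-line specialization — which is precisely the paper's proof.
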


\begin{proof}
 We shall start with  Corollary~\ref{cor1_L2}.
It is now assumed that $\|R_h(\cdot,t)\|_{L_\infty(\Omega)}\le
(D_t^\alpha+\lambda)\EE(t)$ $\forall\,t>0$.
Noting that $R_h=(D_t^\alpha+\LL)e$ and $(D_t^\alpha+\lambda)\EE(t)\le (D_t^\alpha+\LL)\EE(t)$,
one concludes that $|(D_t^\alpha+\LL)e(x,t)|\le (D_t^\alpha+\LL)\EE(t)$ $\forall\,x\in\Omega,\,t>0$.
So an application of Lemma~\ref{lem_max_pr} yields the desired bound
 $|e(x,t)|\le \EE(t)$ in $(0,T]\times\Omega$.

The remaining statements follow from this version of Corollary~\ref{cor1_L2}; to be more precise,
 the new version of Theorem~\ref{the_L2}  is obtained using $\EE(t):=(D_t^\alpha+\lambda)^{-1}\|R_h(\cdot,t)\|_{L_\infty(\Omega)}$,
and the new version of Corollary~\ref{cor1_L2_bounds} using $\EE(t):=\EE_p(t)$, $p=0,1$.
\end{proof}%\vspace{-0.3cm}

\section{Application for the L1 method}\label{sec4}

Given an arbitrary temporal mesh $\{t_j\}_{j=0}^M$ on $[0,T]$, let $\{u_h^j\}_{j=0}^M$ be the semi-discrete approximation for \eqref{problem} obtained using the
popular L1 method \cite{laz_review,stynes_L1}.
Then its standard Lagrange piecewise-linear-in-time interpolant $u_h$, defined on $\bar\Omega\times[0,T]$,
 satisfies\vspace{-0.1cm}
\beq\label{L1method}
(D_t^\alpha+\LL)u_h(x, t_j)=f(x, t_j)\qquad \mbox{for}\;\; x\in\Omega,\;\;  j=1\ldots, M,\vspace{-0.1cm}
\eeq
subject to $u_h^0:=u_0$ and $u_h=0$ on $\pt \Omega$.

%The above immediately implies that
So for
the residual of $u_h$ one immediately gets $R_h(\cdot, t_j)=0$ for $j\ge 1$,
i.e. on each $(t_{j-1},t_j)$ for $j>1$, the residual is a non-symmetric bubble.
Hence, for the piecewise-linear interpolant $R_h^I$ of  $R_h$ one has $R_h^I=0$ for $t\ge t_1$,
and, more generally,
$R_h^I=[\LL u_0-f(\cdot, 0)](1-t/t_1)^+$ for $t>0$
(where we used $R_h(\cdot,0)=\LL u_0-f(\cdot,0)$, in view of $D_t^\alpha u_h^0(\cdot, 0)=0$).
Finally, note that  $R_h-R_h^I=(D_t^\alpha u_h-f)-(D_t^\alpha u_h-f)^I$,
(in view of
$(\LL u_h)^I%=\LL (u_h)^I
=\LL u_h$).
%Finally,
%$R_h=(D_t^\alpha u_h-f)-(D_t^\alpha u_h-f)^I+(\LL u_0-f(\cdot, 0))(1-t/t_1)^+$ for $t>0$
%
In other words, one can compute $R_h$ by sampling, using parallel/vector evaluations, without a direct application of $\LL$ to $\{u^j_h\}$.%
%\vspace{-0.3cm}

\subsection{Numerical results for a test without spatial derivatives}

\noindent{\it Test problem A.}
We start our numerical experiments with a version of \eqref{problem} without spatial derivatives, with $\LL:=3$
and the exact solution $u=u(t)=t^\alpha-t^2$ (which exhibits a typical singularity at $t=0$) for $t\in(0,1]$.
For this problem,
a straightforward adaptive algorithm (see \S\ref{sec_appA})
was employed,
motivated by \eqref{barrier_bounds}, and so %aiming to
constructing a temporal mesh such that $\|R_h(\cdot,t)\|\le \TOL\cdot\RR_p(t)$, $p=0,1$, with  $\tau:=t_1$ in $\RR_1$.
%(For the current test, $\|\cdot\|$ becomes the absolute value.)

  \begin{figure}[t!]%[b!]
% \vspace*{-0.5pc}
\begin{center}
\hspace*{-0.3cm}\includegraphics[height=0.28\textwidth]{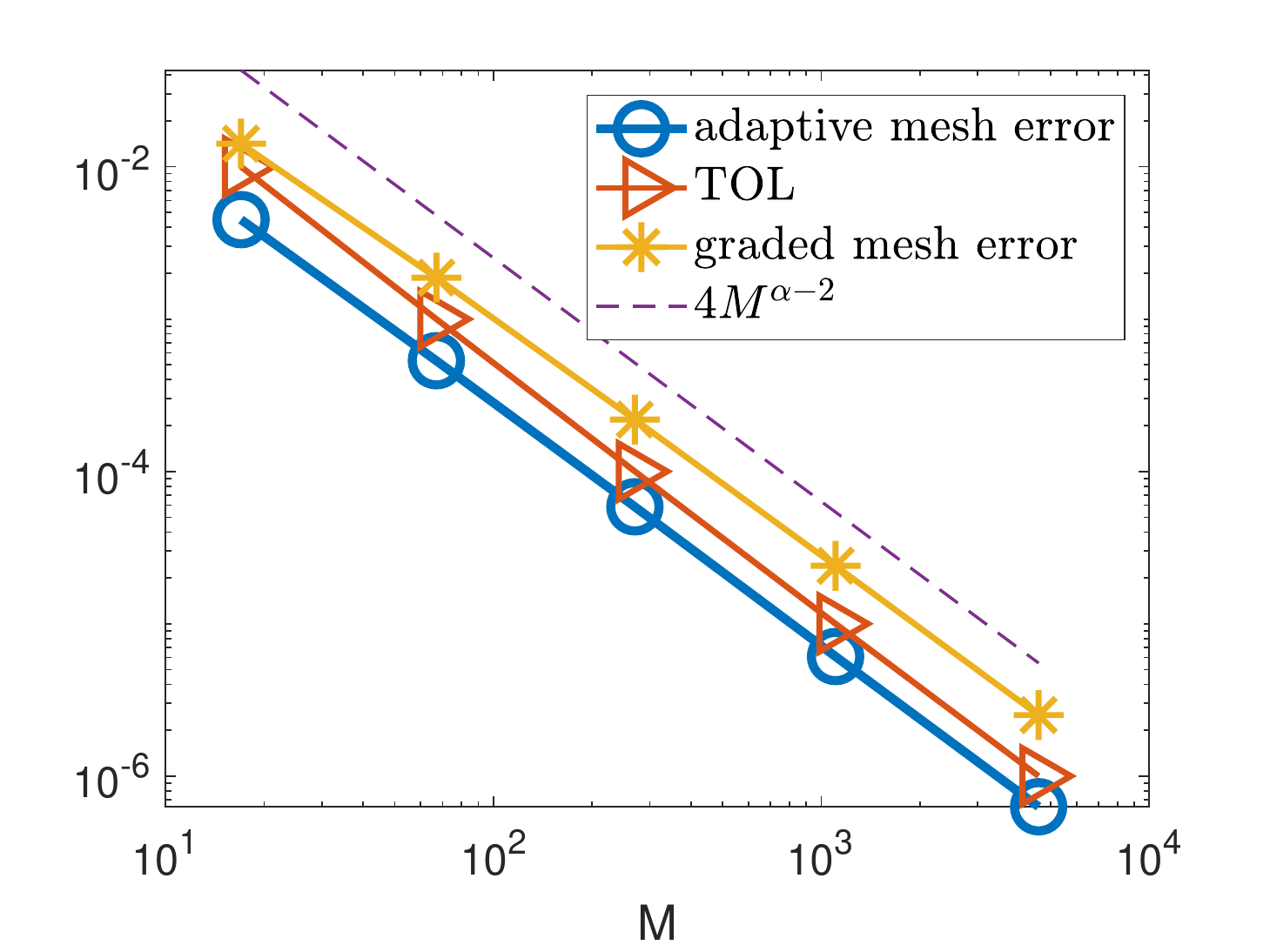}\hspace*{-0.45cm}\includegraphics[height=0.28\textwidth]{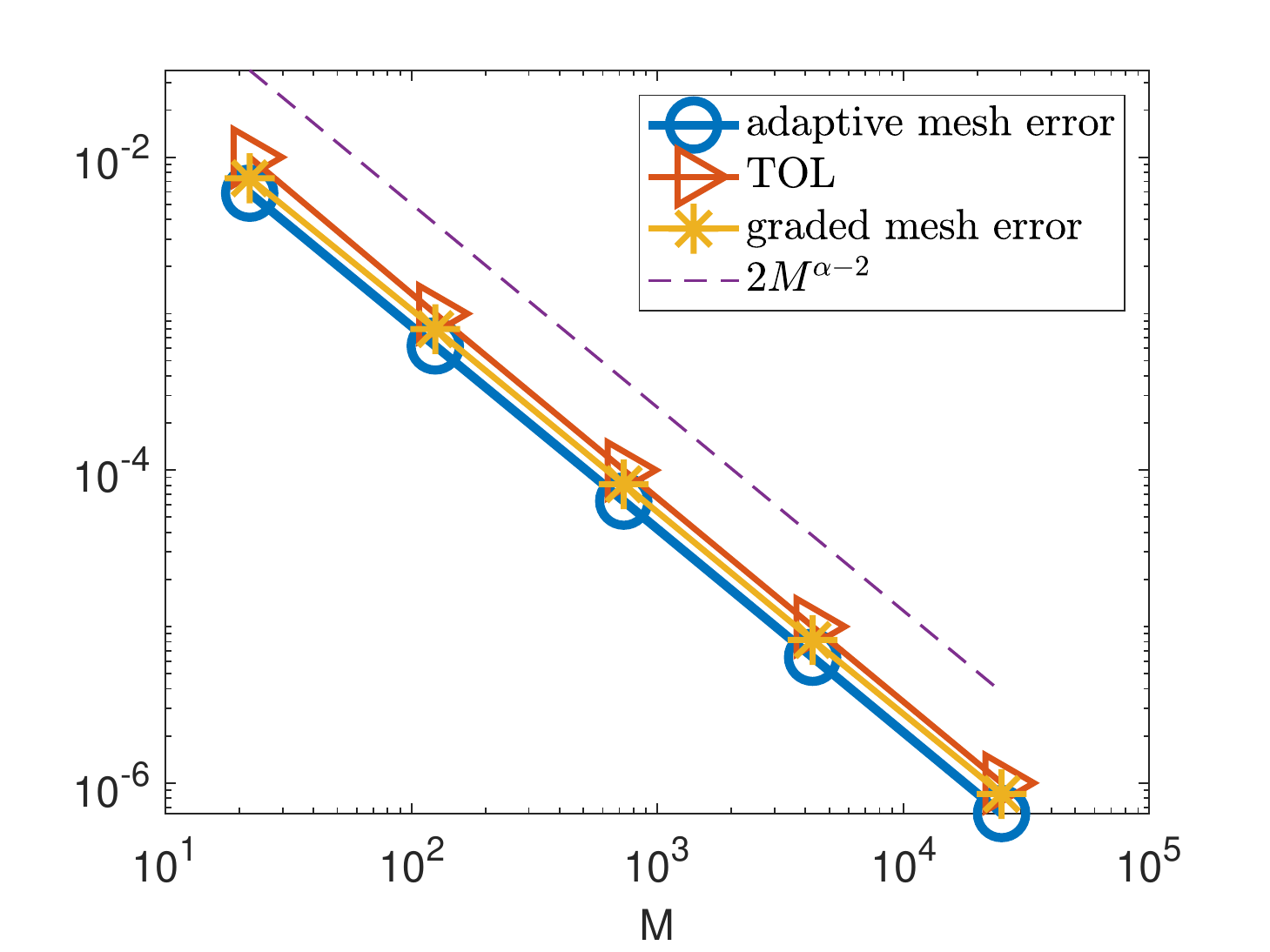}%
\hspace*{-0.45cm}\includegraphics[height=0.28\textwidth]{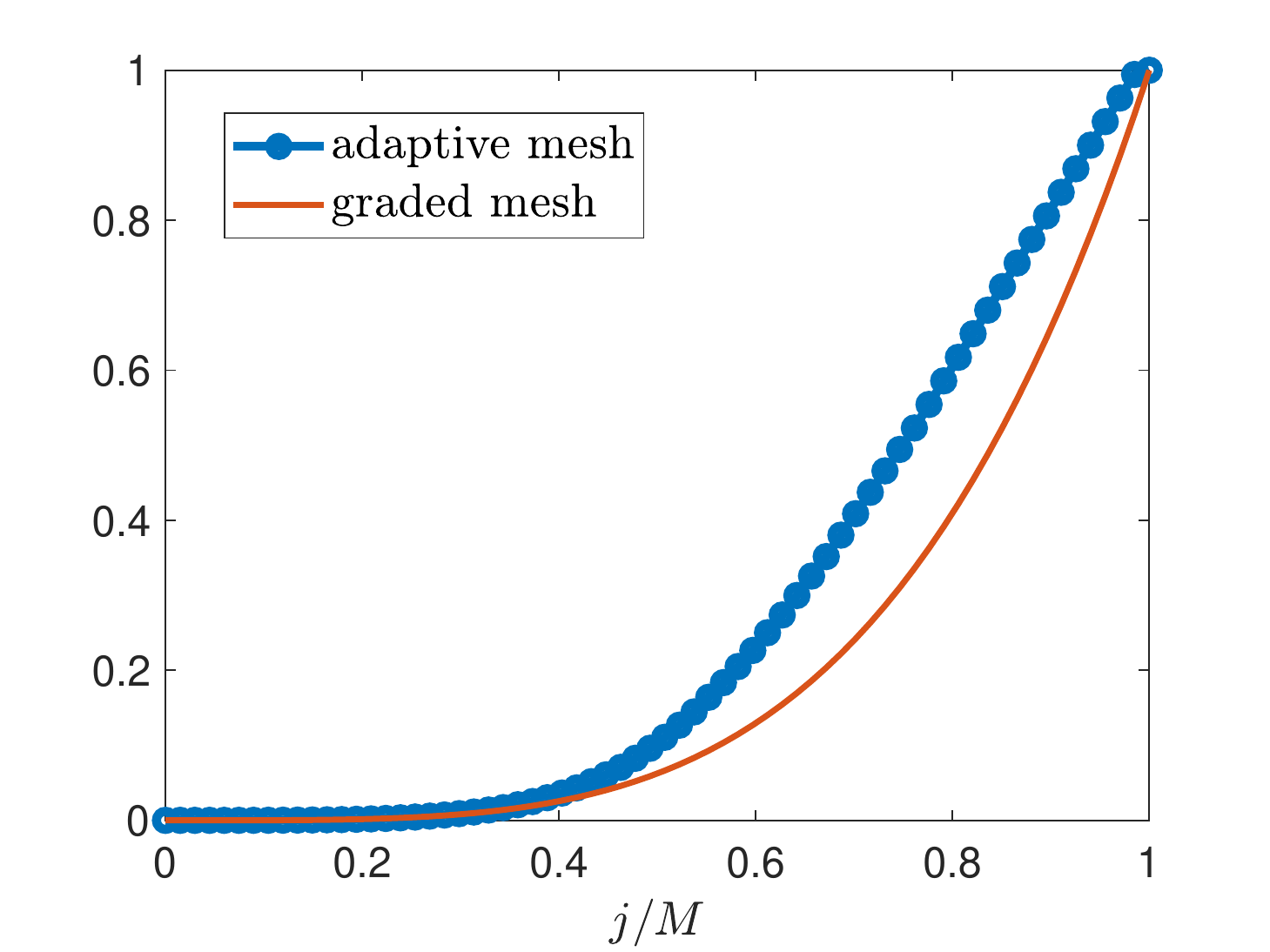}\hspace*{-0.45cm}
%{paradigm_alph04_lambda3_bw}
%\\%
%\includegraphics[height=0.30\textwidth]{r__alpha_L1}~~\includegraphics[height=0.30\textwidth]{r__04_L1}%
\end{center}
\vspace{-0.6cm}
 \caption{\label{fig_ODE_graded}\it\small
Adaptive algorithm with $\RR_0(t)$ for test problem A: %loglog graphs of the
$\max_{[0,T]}|e(t)|$ on the adaptive mesh, the corresponding $\TOL$ and error on the graded mesh,
%with $r=(2-\alpha)/\alpha$,
%, and the loglog line $M^{2-\alpha}$
$\alpha=0.4$ (left) and $\alpha=0.7$ (centre).
Right: graphs of $\{t_j\}_{j=0}^M$ as a function of $j/M$ for
the adaptive mesh v graded mesh with $r=(2-\alpha)/\alpha$,
%represented as mesh-generating functions $t_j=$
 $\alpha=0.7$, $\TOL =10^{-3}$, $M=67$.}
 \end{figure}

   \begin{figure}[t!]%[b!]
% \vspace*{-0.5pc}
\begin{center}
\hspace*{-0.3cm}\includegraphics[height=0.28\textwidth]{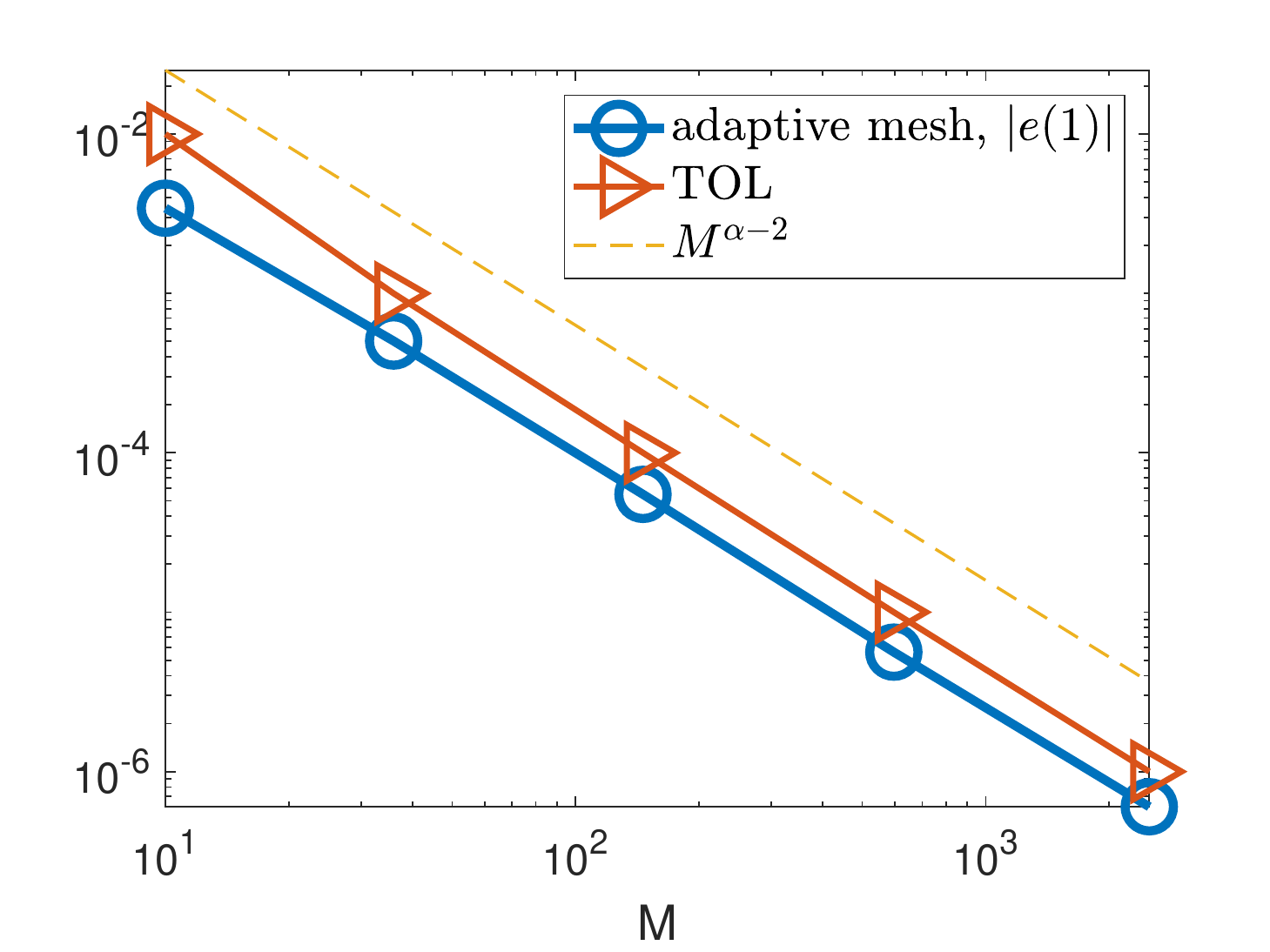}\hspace*{-0.45cm}\includegraphics[height=0.28\textwidth]{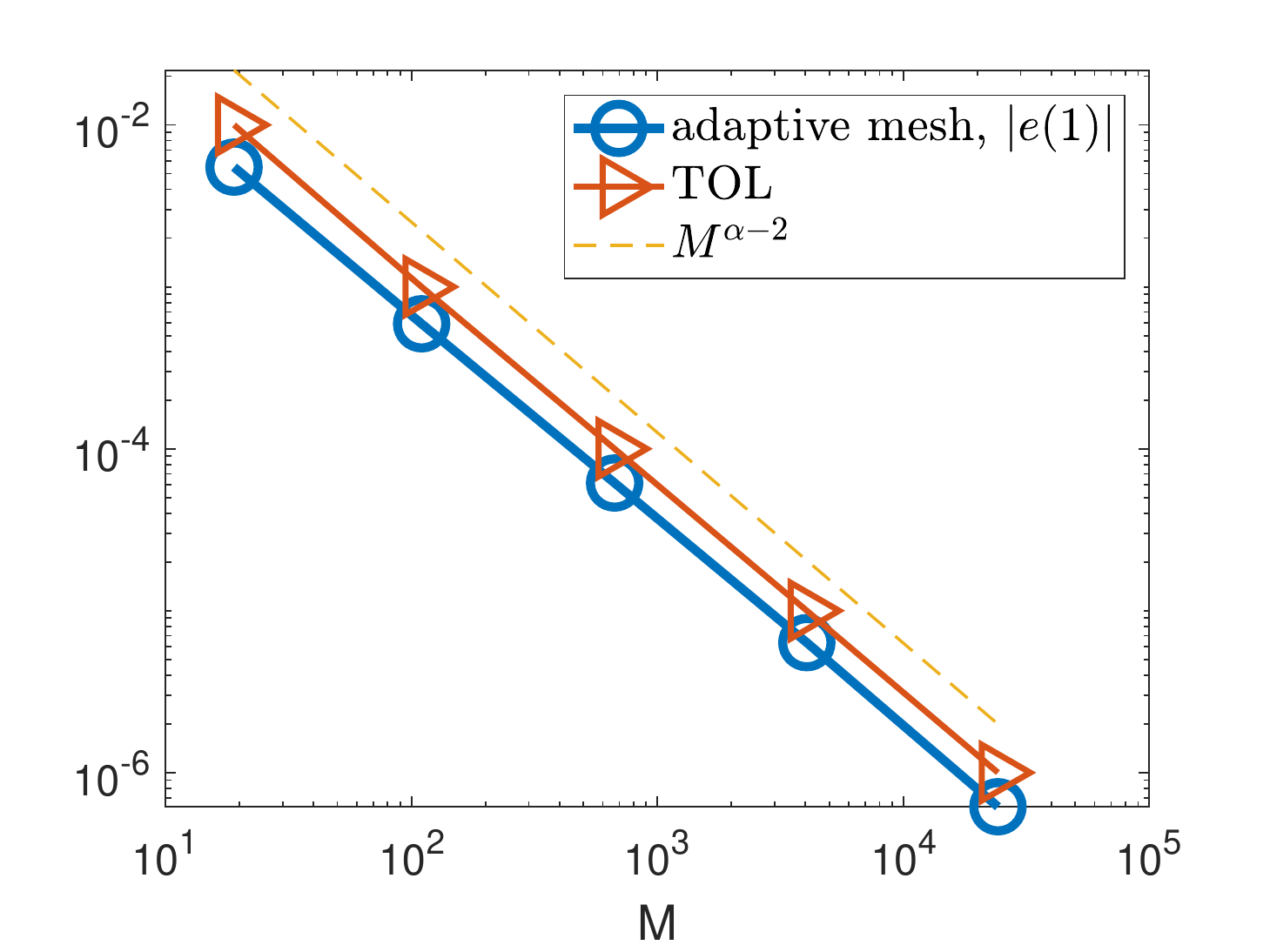}%
\hspace*{-0.45cm}\includegraphics[height=0.28\textwidth]{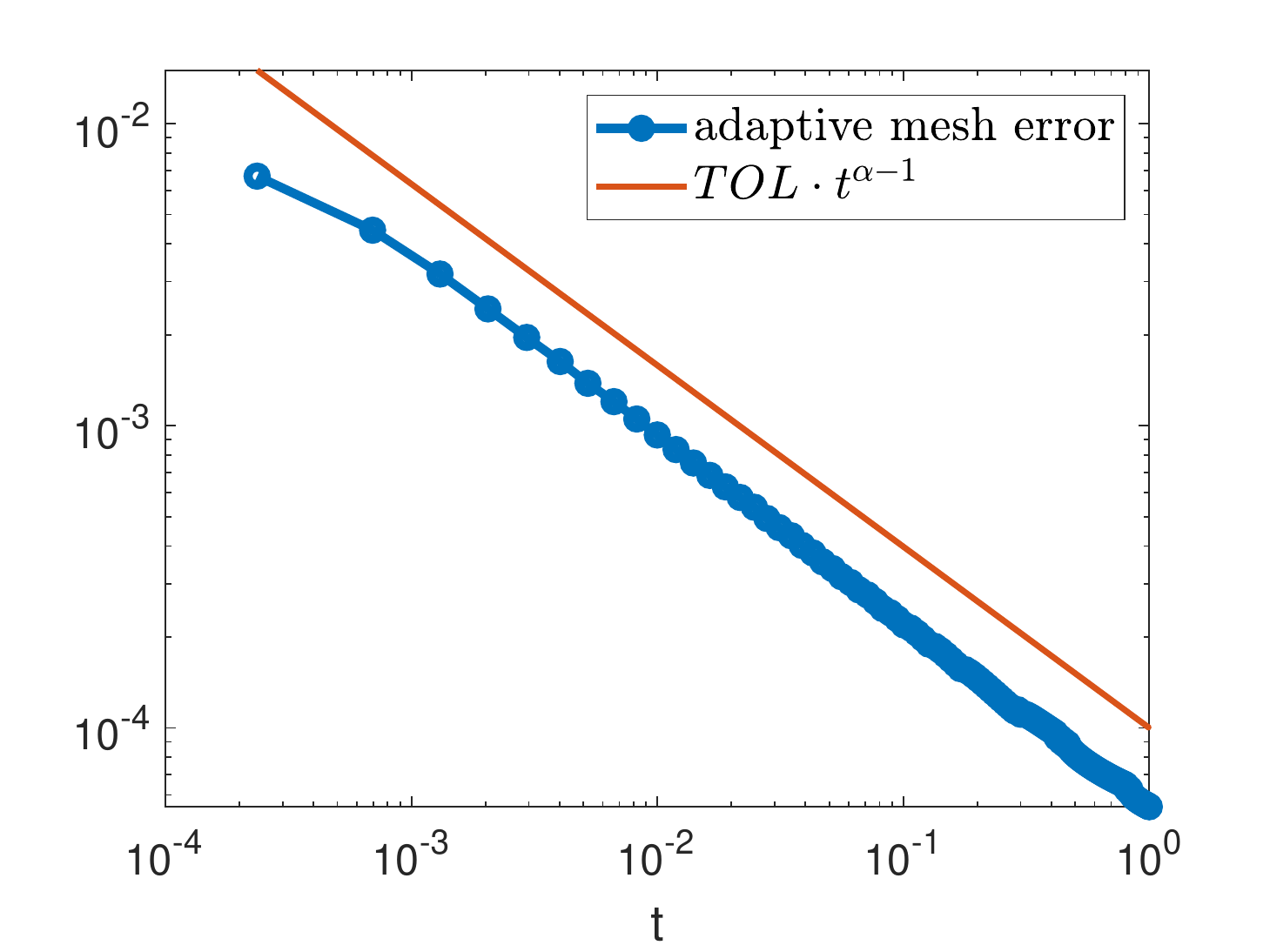}\hspace*{-0.45cm}
%{paradigm_alph04_lambda3_bw}
%\\%
%\includegraphics[height=0.30\textwidth]{r__alpha_L1}~~\includegraphics[height=0.30\textwidth]{r__04_L1}%
\end{center}
\vspace{-0.6cm}
 \caption{\label{fig_ODE_uni}\it\small
 Adaptive algorithm with $\RR_1(t)$ for test problem A: %loglog graphs of the
$|e(1)|$ on the adaptive mesh and the corresponding $\TOL$,
%with $r=(2-\alpha)/\alpha$,
%, and the loglog line $M^{2-\alpha}$
$\alpha=0.4$ (left) and $\alpha=0.7$ (centre).
Right:
log-log graph of the pointwise error $|e(t_j)|$ on the adaptive mesh v $\TOL\cdot t^{\alpha-1}$,
%represented as mesh-generating functions $t_j=$
 $\alpha=0.4$, $\TOL =10^{-4}$, $M=146$.}
 \end{figure}

The errors and rates of convergence obtained using residual barriers $\RR_0(t)$ and $\RR_1(t)$ are presented in Fig.\,\ref{fig_ODE_graded} \& \ref{fig_ODE_uni}.  %respectively.
For $\RR_0$, the errors on the adaptive meshes were compared with the errors on the optimal graded meshes $\{t_j=T(j/M)^r\}_{j=0}^M$
with $r=(2-\alpha)/\alpha$
 \cite{NK_MC_L1,NK_XM,stynes_etal_sinum17}
for the same values of $M$. We observe that in both cases the optimal global rates of convergence $2-\alpha$
are attained. Furthermore, not only the adaptive meshes
successfully detect the solution singularity, but they
slightly outperform the optimal graded meshes.
%\newpage
%\noindent
For $\RR_1$, %a similar comparison was performed with uniform meshes, for which the theoretical pointwise error bound is $O(M^{-1}t^{\alpha-1})$
we observe the optimal rates of convergence $2-\alpha$ at terminal time $t=1$, which is consistent with the error bound
  \cite[(3.2)]{NK_XM} for a mildly graded mesh (see Remark~\ref{rem_R_01}).

\subsection{Numerical results for fractional parabolic test problems}\label{ssec_num_par}

\noindent{\it Test problem B.}
Next, we consider  \eqref{problem} for $(x,t)\in(0,\pi)\times(0,1]$
with $\LL=-\pt_x^2$
and the exact solution $u:=(t^{\alpha}-t^2)\,\sin(x^2/\pi)$, so we set $\lambda:=1$.
The same  adaptive algorithm %(see {\color{red}Appendix })
was employed with $\RR_0(t)$ from
 \eqref{barrier_bounds} to generate temporal meshes, while in space the problem was discretized
 on the uniform mesh with $10^4$ intervals using standard finite differences (equivalent to lumped-mass linear finite elements).
 The numerical results are given on Fig.\,\ref{fig_parabolic} (left, centre) are similar to those on Fig.\,\ref{fig_ODE_graded} for test problem A.
\medskip

\noindent{\it Test problem C.}
Our final test is \eqref{problem} for $(x,t)\in(0,\pi)\times(0,0.2]$
with $\LL=-\pt_x^2$, so %we set
$\lambda:=1$. Now
%$u_0:=\frac12\exp(-2x)$ is piecewise-linear
$u_0:=x$ for $x\le 1$ and
$u_0:=1-(x-1)/(\pi-1)$  for $x\ge 1$,
while $f:=0$.
%, so the exact solution is unknown.
 As $\LL u_0\not\in L_2(\Omega)$, to be able to compute $\|R_h\|$ on $(0,t_1)$, we change the interpolation of the computed solution
 $\{u_h^j\}_{j=0}^M$ on $(0,t_1]$ to piecewise-constant, as described in Remark~\ref{rem_u0_nonreg}.
 The residual becomes
 $R_h=[u_h^1-u_h^0]\{\Gamma(1-\alpha)\}^{-1}t^{-\alpha}+\LL u_h^1-f(\cdot,t)$ for $t\in(0,t_1]$
 and
  $R_h=(D_t^\alpha u_h-f)-(D_t^\alpha u_h-f)^I+[u_h^1-u_h^0]\{\Gamma(1-\alpha)\}^{-1}\sigma(t)$ for $t>t_1$,
  where $\sigma(t):=t^{-\alpha}-(1-\alpha)^{-1}[t^{1-\alpha}-(t-t_1)^{1-\alpha}]/t_1$.
 A fixed mesh with $10^5$ subintervals was used in space.
 The reference solution was computed on a finer mesh.
 %by dividing each spatial interval into 2 and each temporal interval into 4.
 The numerical results, given on Fig.\,\ref{fig_parabolic} (right) indicate that our adaptive algorithm provides adequate error control
 %for corner singularities
 for piecewise-linear initial data, as well as for more typical solution singularities at initial time.
 For a further numerical study of this approach, we refer the reader to \cite{SF_NK}.\vspace{-0.2cm}

   \begin{figure}[t!]%[b!]
% \vspace*{-0.5pc}
\begin{center}
\hspace*{-0.3cm}\includegraphics[height=0.28\textwidth]{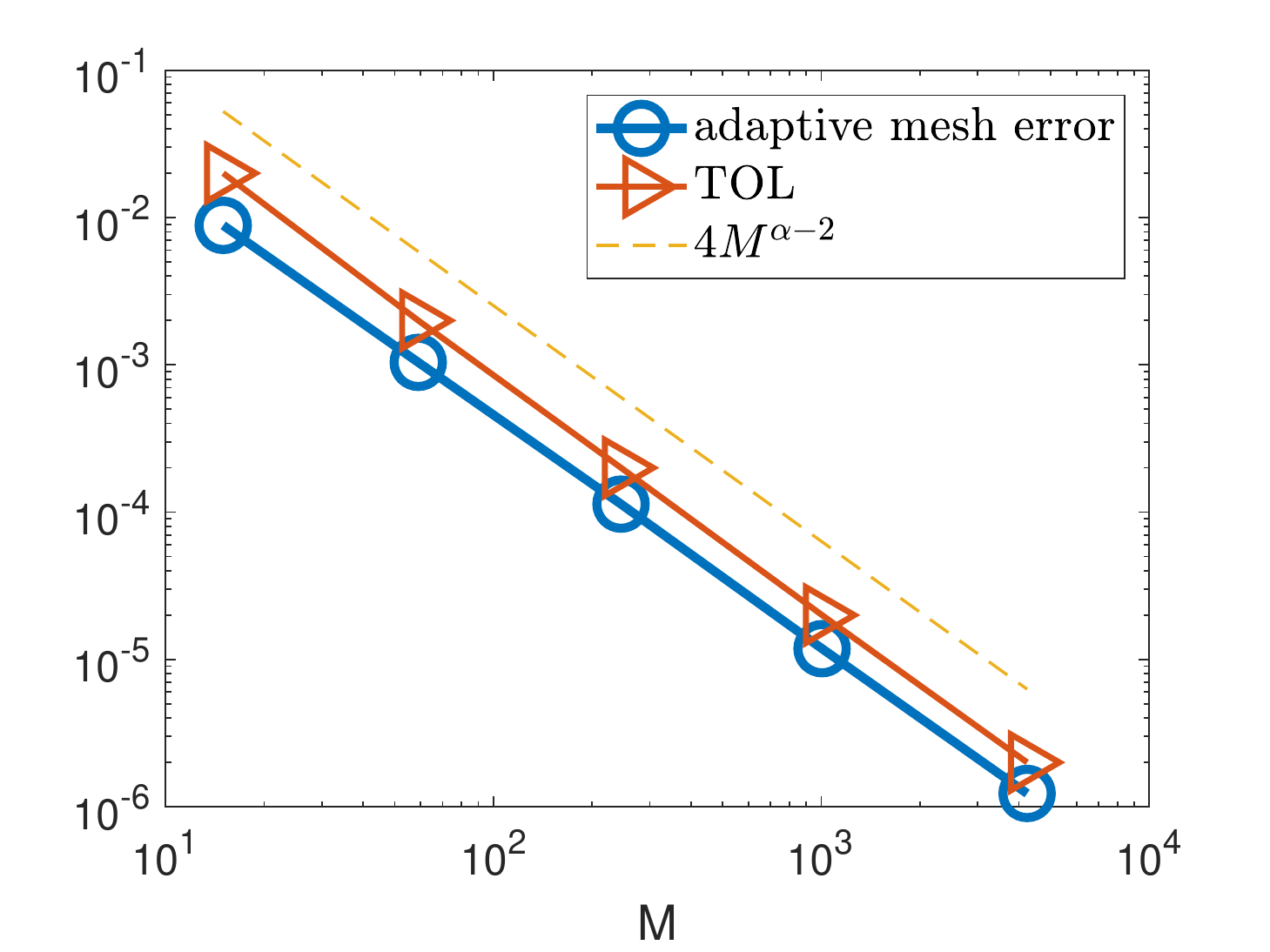}\hspace*{-0.45cm}\includegraphics[height=0.28\textwidth]{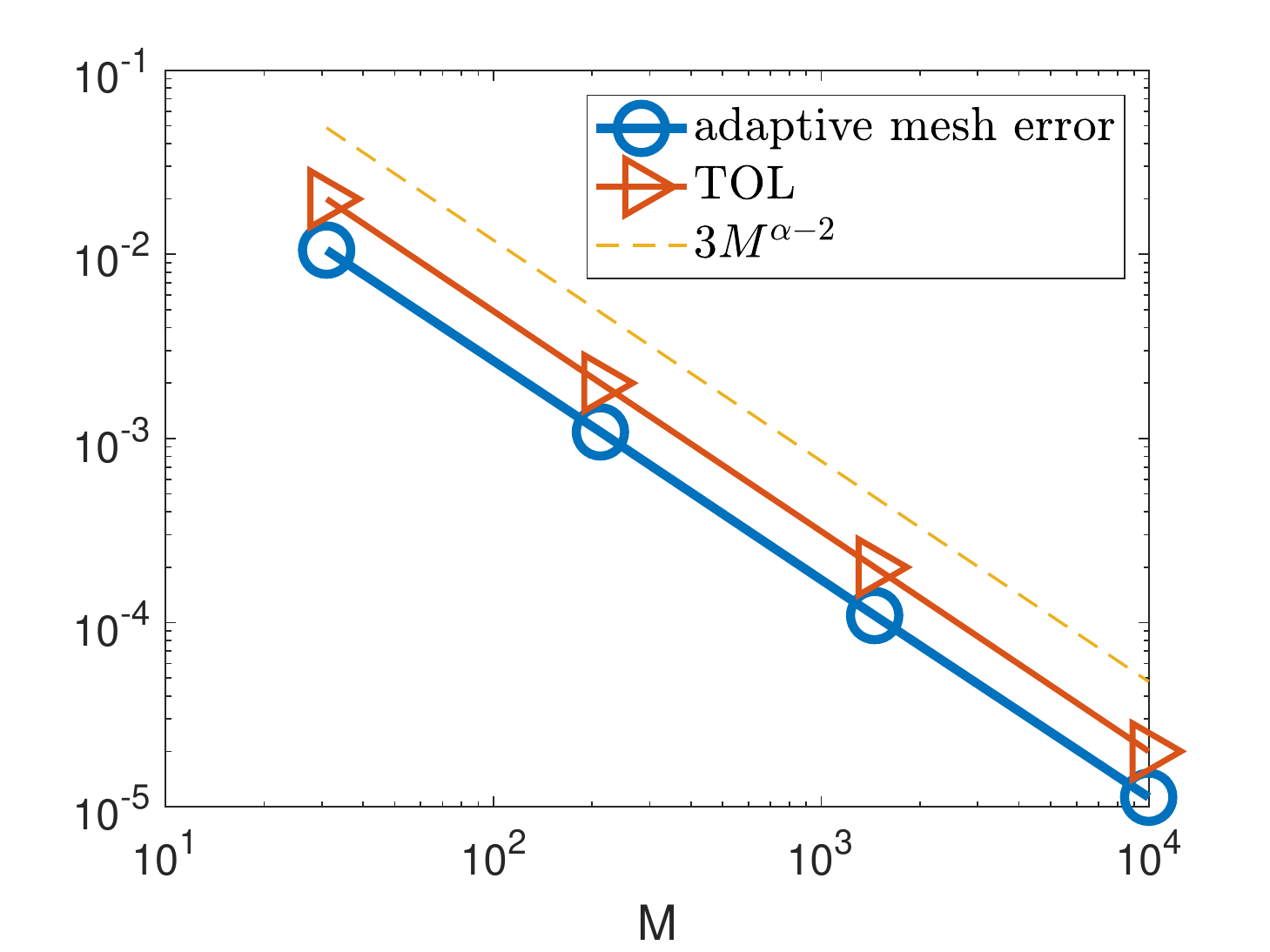}%
\hspace*{-0.45cm}\includegraphics[height=0.28\textwidth]{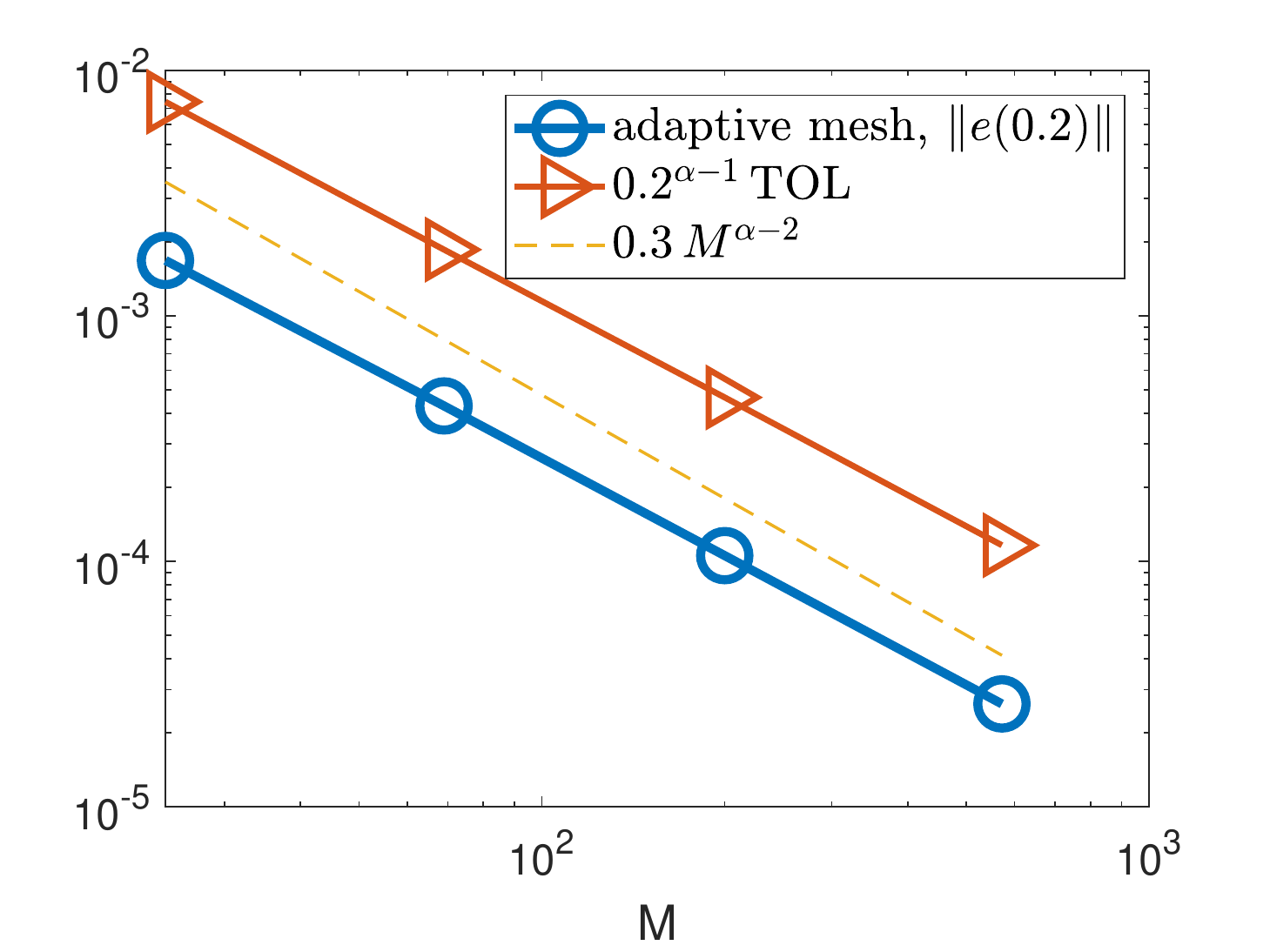}\hspace*{-0.45cm}
%{paradigm_alph04_lambda3_bw}
%\\%
%\includegraphics[height=0.30\textwidth]{r__alpha_L1}~~\includegraphics[height=0.30\textwidth]{r__04_L1}%
\end{center}
\vspace{-0.6cm}
 \caption{\label{fig_parabolic}\it\small
 Adaptive algorithm with $\RR_0(t)$ for parabolic test problem B:
 $\max_{t_j\in(0,T]}\|e(t_j)\|$ on the adaptive mesh
 and the corresponding $\TOL$, $\alpha=0.4$ (left) and $\alpha=0.8$ (centre).
 Adaptive algorithm with $\RR_0(t)$ for parabolic test problem C: , $\|e(0.2)\|$ and  $\TOL$, $\alpha=0.6$ (right).}
 \end{figure}

%\appendix
\subsection{Adaptive algorithm}\label{sec_appA}

\newcommand{\ind}{\hspace*{0.5cm}}

We employed the algorithm in Fig.\,\ref{fig_alg}, with parameters $Q:=1.1$, $\tau_{*}:=5\,\TOL^{1/\alpha}$ for $\RR_0$ and $\tau_{*}:=\TOL$ for $\RR_1$, $\tau_{**}:=0$.
%\vspace{0.4cm}
\noindent
Here we used the standard mathematical notation combined with
the MatLab {\tt while} loop syntax (where, to be precise, {\tt break} denotes an exit from the interior while loop).
%\bigskip

%\noindent~~~\hrulefill~~~

   \begin{figure}[t!]%[b!]
% \vspace*{-0.5pc}
\begin{center}
~\hfill\noindent%\fbox{~~~
    \parbox{0.7\textwidth}{%
    
    \noindent\hrulefill

        %\rule{0pt}{8pt}
        $u_h^0:=u_0$;\;\;$t_0:=0$;\;\;$t_1:=\min\{\tau_{*},\,T\}$;\;\;$m:=0$;

        {\tt while}\;\;$t_m<T$

        \ind $m:=m+1$;\;\;$flag :=0$;

        \ind {\tt while}\;\;$t_{m}-t_{m-1}>\tau_{**}$

        \ind\ind compute $u_h^m$ using \eqref{L1method}

        \ind\ind {\tt if}\;\;$\|R_h(\cdot,t)\|\le\TOL\cdot \RR_p(t)$ $\forall\,t\in(t_{m-1},t_m)$

        \ind\ind\ind {\tt if}\;\;$t_m=T$

        \ind\ind\ind\ind $M:=m$;\;\;{\tt break}

        \ind\ind\ind {\tt elseif}\;\;$t_m<T$

        \ind\ind\ind\ind  $\tilde u_h^m := u_h^m$;\;\;$\tilde t_m := t_m$;%\;\;%

        \ind\ind\ind\ind
        $t_m:=\min\{t_{m-1}+Q(t_m-t_{m-1}) ,\,T\}$;\;\;$flag: =1$;

        \ind\ind\ind {\tt end}

        \ind\ind {\tt else}%\;\;{\color{red}(X)} is not  satisfied

        \ind\ind\ind {\tt if}\;\;$flag =0$

        \ind\ind\ind\ind $t_m:=t_{m-1}+(t_m-t_{m-1})/Q$;

        \ind\ind\ind {\tt else}%\;\;$flag =1$

        \ind\ind\ind\ind $u_h^m := \tilde u_h^m$;\;\;$t_m := \tilde t_m$;%\;\;%

        \ind\ind\ind\ind
        $t_{m+1}:=\min\{t_m+(t_m-t_{m-1}),\, T\}$;\;\;{\tt break}

        \ind\ind\ind {\tt end}

        \ind\ind {\tt end}

        \ind {\tt end}

        {\tt end}%\\[-0.3cm]
        
        \noindent\hrulefill
%}
}\hfill~%
%
%\noindent~~~\hrulefill~~~
\end{center}
\vspace{-0.6cm}
 \caption{\label{fig_alg}\it\small
 Adaptive algorithm.}
 \end{figure}

\end{document}